\newtheorem{prop}{Proposition}[section]
\newtheorem{thm}{Theorem}[section]
\newtheorem{obs}{Observation}
\newcommand{\ES}{\operatorname{ES}}
\newcommand{\ESl}{\ES_L}
\title{Happy Ending or many concurrent lines}
\author{Koki Furukawa}
\subjclass[2024]{Primary 52C10, 52A37}
\keywords{Erd\"{o}s-Szekeres theorem, arrangement of lines}
\begin{document}

\begin{abstract}
The Erd\"{o}s-Szekeres (convex polygon) problem \cite{ES} (also known as Happy Ending problem) is a classical problem about a convex polygon. Erd\"{o}s and Szekeres proved that, for all $n \geq 3$, a sufficiently large set of points in general position in the plane always contains $n$ points in convex position. 
B\'{a}r\'{a}ny, Rold\'{a}n and G.T\'{o}th created a line version of this problem \cite{ESFL}.
They proved that for every $n \geq 2$, a sufficiently large family of lines which form a simple arrangement of lines in the plane always contains $n$ lines which determine the bound of an $n$-cell. 
In this paper, we extend their results to arbitrary family of lines where no two lines are parallel.
For each $n \geq 2$, $l \geq 3$, let $\ESl (l,n)$ be the minimum $N$ such that every family of $N$-lines in the plane contains either $l$ concurrent lines or $n$ lines in convex position. 
We give an upper and lower bound of $\ESl (l,n)$. 
\end{abstract}

\maketitle
\section{Introduction}
Throughout the paper, we fix $x$ and $y$ coordinates of the plane.
A finite set of points is in \textit{general position} if no three of the points are collinear.
A finite set of points is in \textit{convex position} if every point are on the boundary of the convex hull of the set.
Erd\"{o}s and Szekeres proved that for every $n \geq 3$, there is a minimum number $\ES (n)$ such that any set of $\ES (n)$-points in general position in the plane always contains an $n$-subset of points in convex position \cite{ES}.
Finding the exact value of $\ES (n)$ has long been studied.
Erd\"{o}s and Szekeres gave the upper bound of $\ES (n)$ in \cite{ES} which is $\ES (n) \leq \binom{2n-4}{n-2} + 1$.
And they constructed $2^{n-2}$ points in the plane containing no $n$ points in convex position which shows 
$\ES (n) \geq 2^{n-2} + 1$.
This lower bound is believed to be tight and is called the \textit{Erd\"{o}s-Szekeres Conjecture}. It is known to be correct for $n \leq 6$ and it still remains open for $n \geq 7$.
Since their upper bound $\binom{2n-4}{n-2} + 1$ is roughly $O\left( \frac{4^n}{\sqrt{n}} \right)$, there was a huge gap between their upper and lower bounds. 
After the several decades, Suk \cite{SUK} gave a tremendously better upper bound of $\ES (n)$ which is $\ES (n) \leq 2^{n + O(n^{\frac{2}{3}} \log n)}$.
Currently, the best upper bound is due to Holmsen, Mojarrad, Pach and Tardos \cite{HOLMSEN}. They showed that $\ES (n) \leq 2^{n + O(\sqrt{n \log n})}$ by optimizing Suk's argument.

Several variants of this problem have been studied.
B\'{a}r\'{a}ny, Rold\'{a}n and G.T\'{o}th considered the line version of this result \cite{ESFL}.
A finite family of lines in the plane is in \textit{general position} (simple line arrangement) if no three are concurrent, no two are parallel and no lines are vertical.
Given a family of $n$-lines $L$ in the plane and let $n \geq k \geq 2$.
$L$ defines a $k$-\textit{cell} $P$ if $P$ is a connected component of the complement of the union of these lines, and the boundary of $P$ contains a segment of positive length from exactly $k$ of the lines. A family of $n$ lines is in \textit{convex position} if it defines an $n$-cell.
They proved that for every $n \geq 2$, there is a minimum number $\ESl (n)$ such that any family of lines of size $\ESl (n)$ in general position in the plane always contains $n$ lines in convex position. 
They also gave the upper and lower bounds for $\ESl (n)$ which are
$$
2 {\binom{n-4}{\lceil \frac{n}{2} \rceil - 1}}^2 < \ESl (n) \leq \binom{2n-4}{n-2} \quad \mbox{for} \,\, n \geq 5.
$$
Notably, the order of $\ES (n)$ is $2^{n + O(\sqrt{n \log n})}$ while the order of $\ESl(n)$ is $4^n/n^\alpha$ for some $1/2 < \alpha < 1$.
%セルを変換するとどうなるかの図があるといいかも
%注意Figure1を書き直したほうがいいかも

In this paper, we extended their results to almost arbitrary family of lines in the plane as follows.
We say that a finite family of lines is in \textit{nearly general position} if no two are parallel and all lines are non-vertical.
We extend the problem to a family of lines in nearly general position.
For each $n \geq 2$, $l \geq 3$, let $\ESl (l,n)$ be the minimum $N$ such that every family of $N$-lines in the plane in nearly general position contains either $l$ concurrent lines or $n$ lines in convex position. 
Therefore, $\ESl (3,n) = \ESl (n)$.
Hence, finding the value of $\ESl (l, n)$ is the extension of \cite{ESFL}.
From another point of view, 
it can be regarded as a dual version of \cite{BLBC}.
In this paper, we give an upper bound and lower bound of $\ESl (l, n)$ as the following thoerems.

\begin{thm} \label{thm1.1}
There exists $c > 1$ such that, for each $l,n \geq 3$,
$$
\ESl (l,n) \leq c(n+l-1) \cdot \binom{2n-4}{n-2}.
$$
\end{thm}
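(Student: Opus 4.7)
The plan is to reduce to the B\'{a}r\'{a}ny--Rold\'{a}n--G.~T\'{o}th bound $\ESl(n)\le\binom{2n-4}{n-2}$ for simple arrangements. Given a family $L$ of $N=c(n+l-1)\binom{2n-4}{n-2}$ non-parallel, non-vertical lines in which no $l$ are concurrent, the objective is to exhibit a sub-family $L'\subseteq L$ of size at least $\binom{2n-4}{n-2}$ whose members are in general position (no three concurrent), and then apply the theorem of \cite{ESFL} to $L'$ to produce the desired $n$ lines in convex position.

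To construct $L'$, let $P$ be the set of multi-points of the arrangement (intersection points of at least three lines) and let $m_p\le l-1$ denote the multiplicity at $p\in P$. Since each pair of lines meets at a unique point, $\sum_{p\in P}\binom{m_p}{2}\le\binom{N}{2}$, and the total number of concurrent triples is at most $\tfrac{l-3}{3}\binom{N}{2}$. I would build $L'$ greedily, sweeping through the lines in an order adapted to the cap--cup proof of $\ESl(n)\le\binom{2n-4}{n-2}$ and adding each line unless doing so would complete a concurrent triple with two lines already in $L'$. Each pair of already-selected lines meeting at some $p\in P$ blocks at most $m_p-2\le l-3$ subsequent candidates, giving a forbidden-set bound of order $(l-3)\binom{|L'|}{2}$ which controls when the greedy step can still proceed.

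The principal obstacle is matching the linear factor $(n+l-1)$ rather than the quadratic factor $(l-3)\binom{2n-4}{n-2}^{2}$ that the naive greedy analysis above would produce. The extra $n$ must come from structural interaction with the convex-position condition rather than concurrency-avoidance alone: lines lost to concurrency restrictions should still be shown to contribute toward caps and cups of the surviving sub-family through the Erd\"{o}s--Szekeres dichotomy, so that the effective cost of each multi-point is additive (of order $l-1$) instead of multiplicative. I expect the crux of the proof to be exactly this interleaving --- essentially, rerunning the cap--cup induction of B\'{a}r\'{a}ny--Rold\'{a}n--G.~T\'{o}th on $L$ with the concurrency parameter $l$ carried as an additional state, and arguing that every ``deleted'' line can be charged either to a multi-point (which exhausts its budget of $l-1$) or to a constructed cap or cup of the output.
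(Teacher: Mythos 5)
There is a genuine gap. The concrete part of your plan --- greedily extracting a subfamily $L'$ in general position of size $\binom{2n-4}{n-2}$ and then applying $\ESl(n)\le\binom{2n-4}{n-2}$ from \cite{ESFL} --- cannot yield the stated bound, and not merely because your greedy analysis is lossy: the reduction itself fails at this quantitative level. Dualizing the known point-set examples (F\"uredi's construction based on density Hales--Jewett, of $N$ points with no $4$ collinear whose largest subset in general position has size $o(N)$), one gets families of $N$ lines, no $4$ concurrent and no two parallel, whose largest subfamily with no three concurrent has size $o(N)$, in fact only about $\sqrt{N\log N}$ can be guaranteed. So to force a general-position subfamily of size $M=\binom{2n-4}{n-2}$ you need $N$ superlinear (essentially quadratic) in $M$, i.e.\ roughly $16^{n}$ lines rather than the claimed $c(n+l-1)\binom{2n-4}{n-2}\approx 4^{n}$. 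Any proof that passes through ``find a simple sub-arrangement and quote \cite{ESFL}'' is therefore doomed; the $l$-concurrency condition has to be carried through the cup--cap argument itself, as you suspect in your last paragraph.

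But that last paragraph is only a sketch, and it is exactly where the entire difficulty lives. What you describe --- rerunning the cup--cap induction with the concurrency/collinearity parameter $l$ as an extra state and charging losses additively --- is precisely the content of the theorem of Conlon--Fox--He--Mubayi--Suk--Verstra\"ete \cite{BLBC}, which bounds $f(l,p,q)\le c(\min\{p-1,q-1\}+l)\binom{p+q-4}{q-2}$ for point sets (``$l$ collinear, $p$-cup or $q$-cap''), and whose proof is substantially more involved than a charging heuristic (it needs, e.g., Beck's theorem). The paper does not redo that induction at all: it uses the duality $D\colon\{y=mx+c\}\mapsto(m,c)$, under which $n$-cups/caps of lines correspond to $n$-cups/caps of points and $l$ concurrent lines correspond to $l$ collinear points, so that $f_L(l,n,n)=f(l,n,n)$; since an $n$-cup or $n$-cap of lines is in convex position, $\ESl(l,n)\le f_L(l,n,n)$, and the bound follows immediately from \cite{BLBC}. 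Your proposal never invokes this duality or the result of \cite{BLBC}, and without one of the two (or an actual execution of the modified cup--cap induction for lines) the key quantitative step --- the linear factor $(n+l-1)$ --- remains unproved.
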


For $l \geq 3$, we obtain the following.
\begin{thm} \label{thm1.2}
For an integer $k \geq 2$,
if $n = 2k + 2$,
$$
\ESl (l,n) > (l-1) {\binom{2k-2}{k-1}}^2 - (l-3) \binom{2k-4}{k-2}  \binom{2k-2}{k-1}.
$$
If $n = 2k + 1$,
$$
\ESl (l,n) > (l-1) \bigg\{ \binom{2k-2}{k-1} + 1 \bigg\} \cdot \binom{2k-3}{k-1} - (l-3) \binom{2k-4}{k-2} \binom{2k-2}{k-1}.
$$
\end{thm}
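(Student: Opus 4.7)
My plan is to construct an explicit family of lines in nearly general position meeting each bound, by combining a cup/cap base construction with a pencil-fattening operation. For $n=2k+2$ the base family $\mathcal{L}_0$ should consist of $\binom{2k-2}{k-1}^2$ lines in general position with no $n$ lines in convex position, following the philosophy of B\'{a}r\'{a}ny--Rold\'{a}n--T\'{o}th: index lines by $(i,j) \in \{1,\dots,\binom{2k-2}{k-1}\}^2$, pick slopes monotone in $i$, and arrange intercepts so that along any chain in either coordinate the induced lines form a line-cup or line-cap of length less than $k$. A standard splitting argument then shows that any alleged $n$-convex subfamily would decompose into a long cup and a long cap, contradicting the cup/cap bounds of the grid. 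For $n=2k+1$ the grid is taken asymmetric of size $(\binom{2k-2}{k-1}+1)\times\binom{2k-3}{k-1}$, yielding the leading factor in the odd case.

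To exploit the relaxed requirement allowing up to $l-1$ concurrent lines, I would take $l-1$ slightly perturbed copies $\mathcal{L}_0^{(1)},\dots,\mathcal{L}_0^{(l-1)}$ of $\mathcal{L}_0$, arranged so that for each $\ell \in \mathcal{L}_0$ the corresponding $l-1$ lines of the copies pass through a single point $p_\ell$ placed generically in the interior of one cell of the arrangement of $\mathcal{L}_0$. Genericity of the $p_\ell$ ensures that the full arrangement is in nearly general position and that no $l$ lines share a common point. Before any boundary correction, this yields $(l-1)\binom{2k-2}{k-1}^2$ lines.

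The decisive claim is that at most one line of each pencil can appear on the boundary of any convex cell of the full arrangement: if two lines of the pencil through $p_\ell$ both contributed edges to a common convex cell $C$, then $C$ would have to surround or meet $p_\ell$, and since $p_\ell$ lies strictly inside a single cell of $\mathcal{L}_0$, this forces a contradiction once the perturbation is small enough. Consequently, any $n$ lines in convex position in the fattened arrangement project, by choosing one representative per pencil, to $n$ distinct lines of $\mathcal{L}_0$ in convex position, contradicting the choice of $\mathcal{L}_0$. The correction $-(l-3)\binom{2k-4}{k-2}\binom{2k-2}{k-1}$ is explained by rewriting the bound as $2\binom{2k-2}{k-1}^2 + (l-3)\bigl(\binom{2k-2}{k-1}^2-\binom{2k-4}{k-2}\binom{2k-2}{k-1}\bigr)$: the construction is viewed as two complete cup/cap layers together with $l-3$ additional layers in which the $\binom{2k-4}{k-2}\binom{2k-2}{k-1}$ lines corresponding to the innermost sub-grid must be omitted, since retaining them would fuse two pencils into a forbidden convex $n$-cell.

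The main obstacle will be making the ``one line per pencil per cell'' principle fully rigorous and identifying precisely which sub-grid must be omitted from the extra layers. The cup/cap analysis of the base family is routine once the parameters are chosen correctly, but the interaction between pencils across distinct layers is delicate: one must verify that edges contributed by several pencils cannot combine into a spurious convex $n$-cell after perturbation, and that the exact count of positions forced to be deleted matches the stated correction. Pinning down this boundary set, and checking the odd case in parallel with the even one, is where the technical work is concentrated.
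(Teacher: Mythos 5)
Your plan hinges on the claim that at most one line of each pencil can contribute an edge to a convex cell of the fattened arrangement, and this claim is false. Two lines through $p_\ell$ bound four wedges; a cell of the full arrangement that lies in one wedge but is truncated by other lines away from the apex (for instance the cell $\{x>|y|\}\cap\{x>1\}$ in the arrangement $\{y=x,\ y=-x,\ x=1\}$) has positive-length edges on both pencil lines while its closure avoids $p_\ell$ entirely, no matter how generically $p_\ell$ is placed inside a cell of $\mathcal{L}_0$. More to the point, a large convex cell can use two nearly parallel lines of the same pencil as consecutive edges, so $n$ lines in convex position in the fattened family do not project to $n$ lines of $\mathcal{L}_0$ in convex position; naively replacing every line of an extremal convex-position-free family by a pencil genuinely creates larger cups and caps (this is exactly the phenomenon the paper flags in its Remark and Figure 8, and Figure 10, where a $5$-cell is built from two pencils plus two lines, shows cells really do pick up two lines from one pencil). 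Consequently the count $(l-1)\binom{2k-2}{k-1}^2$ before ``correction'' is not achievable by your layering, and your explanation of the subtracted term as an omitted sub-grid is not backed by any verified mechanism.

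The paper's route is built precisely around this obstruction. Pencils of $l-1$ concurrent lines are introduced only inside the cup/cap base gadgets $\mathcal{F}_{p,2}^l$, where a cup can use at most two lines per pencil, so one places only $\lfloor p/2\rfloor$ pencils along a cup to keep all cups of size at most $p$; the Erd\H{o}s--Szekeres-type recursion $\mathcal{F}_{p,q}^l=\mathcal{F}_{p-1,q}^l(a_1,\varepsilon)\cup\mathcal{F}_{p,q-1}^l(a_2,\varepsilon)$ then yields $|\mathcal{F}_{p,q}^l|\ge \frac{l-1}{2}\binom{p+q-2}{q-1}-\frac{l-3}{2}\binom{p+q-4}{q-2}$, which is where the factor $\frac{l-1}{2}$ and the correction term actually come from (the ``loss'' is the halving inside the gadgets, not a deleted sub-grid). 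These gadgets, which also avoid $4$-cells unbounded to the right, are substituted via a contracting affine map for the lines of a reflected convex-position-free family, with a case analysis (on how a hypothetical $n$-cell meets the substituted blocks) replacing your projection argument; finally the factor is doubled from $\frac{l-1}{2}$ to $l-1$ by taking the union of a left-oriented and a right-oriented copy of this construction. If you want to salvage your approach, you would need to replace the one-line-per-pencil principle by a correct two-lines-per-pencil accounting, and at that point you are forced into essentially the paper's halved-pencil gadgets.
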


Here the order of the upper and lower bounds are roughly $l \frac{4^n}{n}  \lesssim \ESl (l,n) \lesssim c(n+l-1)\frac{4^n}{\sqrt{n}}$.
 From now on, we assume throughout that our families of lines are always in nearly general position.

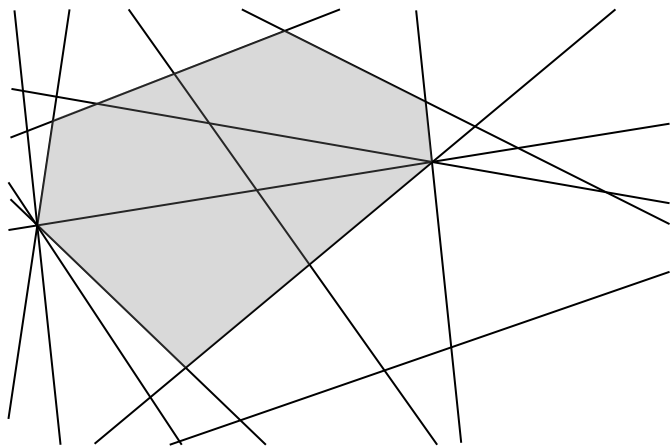
\begin{figure}[h]
\tikzset{every picture/.style={line width=0.75pt}} %set default line width to 0.75pt        

\begin{tikzpicture}[x=0.75pt,y=0.75pt,yscale=-0.5,xscale=0.5]
%uncomment if require: \path (0,438); %set diagram left start at 0, and has height of 438

%Straight Lines [id:da4068892612243291] 
\draw    (160,438) -- (659,264) ;
%Straight Lines [id:da1830387063220038] 
\draw    (1,191) -- (256,438) ;
%Straight Lines [id:da14222392915564241] 
\draw    (85,437) -- (605,0) ;
%Straight Lines [id:da15405123189291348] 
\draw    (-1,222) -- (659,115) ;
%Straight Lines [id:da7380488696598813] 
\draw    (-1,412) -- (60,0) ;
%Straight Lines [id:da3608271040201734] 
\draw    (1,129) -- (330,0) ;
%Straight Lines [id:da8636989608896317] 
\draw    (2,80) -- (659,195) ;
%Straight Lines [id:da5658781800681072] 
\draw    (232,0) -- (659,216) ;
%Straight Lines [id:da47585724007042374] 
\draw    (406,1) -- (451,436) ;
%Straight Lines [id:da799770661400363] 
\draw    (119,0) -- (427,438) ;
%Shape: Polygon [id:ds6043799717845464] 
\draw  [draw opacity=0][fill={rgb, 255:red, 128; green, 128; blue, 128 }  ,fill opacity=0.3 ] (43,111) -- (27,217) -- (175,361) -- (422,153) -- (415,93) -- (275,21) -- cycle ;
%Straight Lines [id:da6620135199867114] 
\draw    (5,1) -- (51,438) ;
%Straight Lines [id:da018332040262156957] 
\draw    (-1,174) -- (172,438) ;
\end{tikzpicture}
\caption{A $6$-cell.}
\end{figure}

\section{Upper bound}

We say that a family of $n$-lines $\mathcal{F}$ forms an $n$-\textit{cup} ($n$-\textit{cap}) if $\mathcal{F}$ is in convex position and corresponding $n$-cell $C$ has the property that its intersection with vertical lines is a half-line bounded from below (above).
Note that any single non-vertical line forms a $1$-cup and a $1$-cap.
An $n$-cell $C$ defined by a family of lines $L = \{ l_1, \ldots, l_n \}$ (suppose they are ordered according to slope) is said to be an $n$-\textit{cell unbounded to the right (left)} if $C$ is unbounded and for two end lines $l_1$ and $l_n$, $l_1 \cap cl(C)$ and $l_n \cap cl(C)$ are both half-lines which is unbounded to the right (left) where $cl(C)$ denotes the closure of $C$.

%cup-capの図
\begin{figure}[h]
\tikzset{every picture/.style={line width=0.75pt}} %set default line width to 0.75pt        
\begin{tikzpicture}[x=0.75pt,y=0.75pt,yscale=-0.65,xscale=0.65]
%uncomment if require: \path (0,222); %set diagram left start at 0, and has height of 222

%Straight Lines [id:da1035176254195076] 
\draw    (2,43) -- (79,178) ;
%Straight Lines [id:da4614041528943744] 
\draw    (10,105) -- (112,162) ;
%Straight Lines [id:da6054296802585284] 
\draw    (187,99) -- (58,173) ;
%Straight Lines [id:da9351874863971572] 
\draw    (187,43) -- (100,166) ;
%Straight Lines [id:da3423472319699854] 
\draw    (8,13) -- (37,149) ;
%Straight Lines [id:da09213386789428157] 
\draw    (408.43,146.55) -- (313.31,23.64) ;
%Straight Lines [id:da04752442513506616] 
\draw    (391.84,86.27) -- (282.87,44.1) ;
%Straight Lines [id:da24157418609566017] 
\draw    (228,131) -- (334.8,25.66) ;
%Straight Lines [id:da46695354881943074] 
\draw    (225.25,172.42) -- (294.19,38.46) ;
%Straight Lines [id:da6724910617157336] 
\draw    (407.69,174.09) -- (359.95,43.48) ;
%Straight Lines [id:da5724577195289304] 
\draw    (659,11) -- (473,97) ;
%Straight Lines [id:da26029069307220576] 
\draw    (523,27) -- (477,181) ;
%Straight Lines [id:da028636270819612175] 
\draw    (574,20) -- (472,137) ;
%Straight Lines [id:da11843264812396481] 
\draw    (661.5,146.5) -- (474.5,170.5) ;
%Shape: Polygon [id:ds7394466152616095] 
\draw  [draw opacity=0][fill={rgb, 255:red, 0; green, 0; blue, 0 }  ,fill opacity=0.16 ] (8,13) -- (187,43) -- (122.5,136) -- (95,152) -- (50,129) -- (21,76) -- cycle ;
%Shape: Polygon [id:ds7361225397203011] 
\draw  [draw opacity=0][fill={rgb, 255:red, 0; green, 0; blue, 0 }  ,fill opacity=0.16 ] (325,60) -- (349,70) -- (387,121) -- (407.69,174.09) -- (225.25,172.42) -- (265,94) -- (292,68) -- cycle ;
%Shape: Polygon [id:ds9486121935543208] 
\draw  [draw opacity=0][fill={rgb, 255:red, 0; green, 0; blue, 0 }  ,fill opacity=0.16 ] (531,70) -- (659,11) -- (661.5,146.5) -- (481,170) -- (498,109) -- cycle ;
%Straight Lines [id:da3288565021022478] 
\draw    (34,166) -- (185,132) ;
%Straight Lines [id:da5599597074745319] 
\draw    (1,26) -- (62,178) ;
%Straight Lines [id:da09757940712855118] 
\draw    (414,167) -- (347,48) ;
%Straight Lines [id:da8281815967257564] 
\draw    (231,82) -- (382,48) ;
%Straight Lines [id:da807133312187585] 
\draw    (1,18) -- (174,168) ;
\end{tikzpicture}
\caption{A cup, a cap and a $4$-cell unbounded to the right.}
\end{figure}
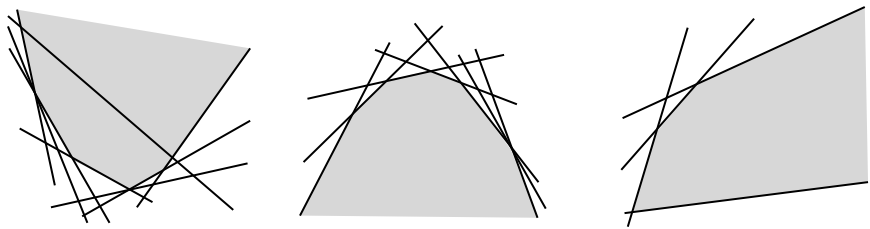

Let $f_L (l,p,q)$ be the minimum $N$ such that every family of $N$-lines in the plane contains either $l$ concurrent lines, a $p$-cup or a $q$-cap.\\
In \cite{BLBC}, they defined the number
$f (l,p,q)$ as the minimum $N$ such that every $N$-point set in the plane (suppose each points have distinct $x$-coordinates) contains either $l$ collinear members, a $p$-cup or a $q$-cap.
Here, we say that an $n$-point set $X$ in the plane forms an $n$-\textit{cup} ($n$-\textit{cap}) if the points of $X$ lie on the graph of a convex (concave) function.
They proved the following.

\begin{thm} $($\cite{BLBC}$)$.
There is an absolute constant $c > 1$ such that, for $p,q,l \geq 3$,
$$
f (l,p,q) \leq c( \min\{p-1, q-1\} + l) \cdot \binom{p+q-4}{q-2}.
$$
\end{thm}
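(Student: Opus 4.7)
The plan is to adapt the classical Erd\"{o}s--Szekeres cup-cap recursion, by induction on $p+q$, localizing the places where general-position reasoning breaks down onto individual lines, which are then controlled by the $l$-collinear hypothesis.

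For the base cases $p=2$ or $q=2$, any two points form both a 2-cup and a 2-cap, so $f(l,2,q)=f(l,p,2)=2$, matching the claimed bound for $c$ large enough. For the inductive step, I would take a planar point set $P$ with distinct $x$-coordinates, no $l$ collinear members, no $p$-cup, and no $q$-cap, and partition $P$ according to whether each point is the rightmost endpoint of some $(p-1)$-cup. Writing $T$ for the set of such cup tops, the complement $P\setminus T$ contains no $(p-1)$-cup, so by the inductive hypothesis $|P\setminus T|<f(l,p-1,q)$.

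The bulk of the work is bounding $|T|$. In the general-position setting one argues that $T$ contains no $(q-1)$-cap: given $t_1<\cdots<t_{q-1}$ in $T$ together with a $(p-1)$-cup $c_1<\cdots<c_{p-1}=t_1$, comparing the slopes of $c_{p-2}c_{p-1}$ and $t_1t_2$ produces either an extended $p$-cup $c_1,\ldots,c_{p-1},t_2$ or an extended $q$-cap $c_{p-2},t_1,\ldots,t_{q-1}$. The only way this fails is when those two slopes coincide, that is, when $c_{p-2},t_1,t_2$ are collinear. Since each line carries at most $l-1$ points of $P$, only a bounded number of bad caps can be pinned to any single line. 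Iterating the slope-swap on the deeper pairs $(c_{p-3},t_3),(c_{p-4},t_4),\dots$, I would show that each bad cap is charged to a short chain of collinear triples, producing a perturbed recursion
\[
f(l,p,q) \leq f(l,p-1,q)+f(l,p,q-1)-1+E(l,p,q),
\]
with $E(l,p,q)=O(l+\min(p,q))$. The ordinary Pascal sum of the target bound leaves a surplus of order $c\binom{p+q-5}{q-2}$, which provides enough slack to absorb $E$, and induction then yields $c(\min\{p-1,q-1\}+l)\binom{p+q-4}{q-2}$.

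The hardest step will be the precise bookkeeping of how collinear defects accumulate. One must show that each failure of the slope-comparison argument is pinned to a specific line of the arrangement, that each such line can be ``used up'' only $O(l)$ times across all bad caps, and that no cascading of degeneracies can make the recursion's error term grow super-linearly in $l$ or $\min(p,q)$; once this localization is in hand, everything downstream is binomial arithmetic.
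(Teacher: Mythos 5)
There is a genuine gap, and it sits exactly where you defer to ``precise bookkeeping.'' Note first that the paper does not prove this statement at all: it is quoted from \cite{BLBC}, and the Remark immediately after it (the absolute constant $c$ must be at least $10/\varepsilon$, where $\varepsilon$ is the constant in Beck's theorem) shows that the cited proof relies on a global incidence-geometric input, not on a local repair of the Erd\H{o}s--Szekeres cup--cap recursion. Your sketch replaces that input with the assertion that each failure of the slope comparison can be charged to a line and that ``each such line can be used up only $O(l)$ times across all bad caps,'' giving a per-step error $E(l,p,q)=O(l+\min(p,q))$. This is not justified and is the actual content of the theorem. The failure case is precisely that $c_{p-2},t_1,t_2$ are collinear for \emph{every} $(p-1)$-cup ending at $t_1$, so all such cups have their last edge along the single line $\ell(t_1)=t_1t_2$; but a line carrying $l-1$ points can block many distinct $(q-1)$-caps (different second points on $\ell(t_1)$, arbitrary tails $t_3,\dots,t_{q-1}$, and many different blocked starting points $t_1$ on different lines), and nothing in your argument bounds the number of blocked starting points, or the number of points you must discard from the cup-endpoint set $T$, by anything like $l+\min(p,q)$. (Also, for any of this to be nontrivial the cups and caps must be \emph{strictly} convex/concave; with the non-strict reading the classical recursion goes through verbatim with no $l$-dependence.)

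Even granting the charging claim, the absorption step is arithmetically wrong. Writing $F(p,q)=c(\min\{p-1,q-1\}+l)\binom{p+q-4}{q-2}$, the $l$-portions of $F(p-1,q)+F(p,q-1)$ telescope exactly by Pascal's rule, so the surplus $F(p,q)-F(p-1,q)-F(p,q-1)$ comes only from the $\min$-portion and equals $c\binom{p+q-5}{p-3}$ when $p<q$ (plus the symmetric term when $p=q$); it is independent of $l$. For fixed small $p$ (say $p=4$ or $5$) this surplus is a low-degree polynomial in $q$, while your per-step error is $\Theta(l)$; as soon as $l\gg\binom{p+q-5}{p-3}$ the surplus cannot absorb $E$, and since $c$ must be absolute you cannot inflate it to compensate. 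So the induction as you set it up does not close: the error term would have to be made to telescope together with the $l$-part, or the recursion restructured, or a global tool such as Beck's theorem brought in, which is what the proof in \cite{BLBC} actually does.
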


\,\\
\textbf{Remark.}
According to \cite{BECK}, there is an absolute constant $\varepsilon > 0$ such that every $t$-elements point set in the plane contains either $\varepsilon t$ collinear points or determines at least $\varepsilon \binom{t}{2}$ distinct lines.
Theorem $2.1$ requires absolute constant $c > 1$ to be at least $\frac{10}{\varepsilon}$.
\\

To prove Theorem $1.1$, we consider the dual transformation $D$ between the set of non-vertical lines in the plane
and the set of points in the plane. That is,
$$
D : \{ y = mx + c \} \mapsto (m,c).
$$
This transformation satisfies the following properties.
\begin{obs}
$l_1, \ldots, l_n$ form an $n$-cup $($$n$-cap$)$ if and only if $D(l_1), \ldots, D(l_n)$ form an $n$-cup $($$n$-cap$)$.
\end{obs}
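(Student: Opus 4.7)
The plan is to reduce the biconditional to a purely algebraic monotonicity check on an explicit sequence of fractions, after ordering the lines by slope. First I would write $l_i : y = m_i x + c_i$ with $m_1 < \cdots < m_n$; then the dual points $p_i = D(l_i) = (m_i, c_i)$ automatically appear in order of $x$-coordinate. The two key formulas are
\[
x(l_i \cap l_{i+1}) \;=\; \frac{c_i - c_{i+1}}{m_{i+1} - m_i},
\qquad
\mathrm{slope}(\overline{p_i\, p_{i+1}}) \;=\; \frac{c_{i+1} - c_i}{m_{i+1} - m_i},
\]
so that the sequence of breakpoints of the upper envelope and the sequence of segment-slopes through consecutive dual points agree up to an overall sign.

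Next I would translate each side of the biconditional into a monotonicity condition. On the line side, $\{l_1, \dots, l_n\}$ is an $n$-cup iff all $n$ lines contribute a positive-length segment to the upper envelope, which is the lower boundary of the cup cell. Since the upper envelope of non-parallel non-vertical lines traverses them in order of increasing slope, this is equivalent to the sequence $\bigl(x(l_i \cap l_{i+1})\bigr)_{i=1}^{n-1}$ being strictly monotone, and the $n$-cap case is the symmetric statement with the lower envelope in place of the upper envelope. On the point side, $\{p_1, \dots, p_n\}$ is an $n$-cup iff its points lie on the graph of a convex function, i.e.\ iff the consecutive-slope sequence $\bigl(\mathrm{slope}(\overline{p_i\, p_{i+1}})\bigr)_{i=1}^{n-1}$ is strictly monotone. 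Because the two sequences coincide up to sign, these monotonicity conditions are equivalent, giving the observation.

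The hard part is really the orientation bookkeeping: I would need to check that the sign conventions in the definitions of cup and cap on both sides, combined with the direction in which the envelopes traverse the lines by slope, line up correctly so that the monotonicities really correspond the way the statement asks---cup with cup and cap with cap under $D$---rather than being swapped by the sign flip $x(l_i \cap l_{i+1}) = -\mathrm{slope}(\overline{p_i\, p_{i+1}})$. Once this orientation check is carried out, the observation follows immediately from the elementary fact that a sequence is strictly monotone iff its term-by-term negation is strictly monotone in the opposite direction.
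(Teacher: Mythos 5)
Your reduction is the right machinery, and since the paper states this Observation with no proof at all, an explicit dual computation of exactly this kind is what is called for: order by slope, note $x(l_i\cap l_{i+1})=\frac{c_i-c_{i+1}}{m_{i+1}-m_i}$ and $\mathrm{slope}(\overline{p_i p_{i+1}})=\frac{c_{i+1}-c_i}{m_{i+1}-m_i}$, and characterize a line-cup by all $n$ lines contributing positive-length pieces to the relevant envelope, equivalently by strict monotonicity of the breakpoint sequence. But the step you explicitly postpone, ``the orientation bookkeeping,'' is the entire content of the statement, and with the paper's map $D(\{y=mx+c\})=(m,c)$ it does not come out the way the statement asks. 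Since $x(l_i\cap l_{i+1})=-\,\mathrm{slope}(\overline{p_i p_{i+1}})$, ``breakpoints strictly increasing'' (the condition for all lines to appear on the upper envelope, i.e.\ for the cell whose vertical sections are half-lines bounded from below --- the line-cup) is equivalent to the dual slopes being strictly decreasing, i.e.\ to the dual points forming a cap, not a cup. Concretely, $y=-x$, $y=0$, $y=x-1$ form a $3$-cup (breakpoints $0<1$, all three lines on the upper envelope), while their duals $(-1,0),(0,0),(1,-1)$ lie on a concave graph. So your closing remark, that the claim ``follows immediately'' from the fact that negating a sequence reverses its monotonicity, actually completes the proof of the swapped correspondence (line-cup $\leftrightarrow$ dual cap). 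To obtain cup $\leftrightarrow$ cup as worded one must adjust the convention, e.g.\ take $D:\{y=mx+c\}\mapsto(m,-c)$ or compose with a reflection, under which your two sequences coincide with no sign flip; this discrepancy is harmless for Theorem 1.1, which only uses the symmetric case $p=q=n$, but a proof must either carry out the check under a fixed convention or record the swap. As submitted, the proposal stops precisely where the verification is needed.

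Two smaller points to settle when you finish: state the direction of monotonicity on each side rather than ``strictly monotone'' (upper envelope $\Leftrightarrow$ breakpoints increasing; convex dual points $\Leftrightarrow$ consecutive slopes increasing), and decide how degenerate configurations are handled --- three concurrent lines never form a $3$-cup, since the middle line meets the closure of the cell in a single point, whereas three collinear dual points do ``lie on the graph of a convex function'' under the paper's wording --- so strict versus weak inequalities must be matched on the two sides before the equivalence is literally true.
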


\begin{obs}
For each $l \geq 3$,
points $p_1, \ldots, p_l$ are collinear if and only if lines $D(p_1), \ldots, D(p_l)$ are concurrent.
\end{obs}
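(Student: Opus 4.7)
The proof is the standard point--line duality computation, essentially routine. First I would fix the interpretation of $D$ on points: reading the defining formula $D\colon\{y=mx+c\}\mapsto(m,c)$ in reverse, a point $p=(a,b)$ corresponds to the line $D(p)=\{y=ax+b\}$, and this is the meaning of $D(p_i)$ in the statement.

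For the forward direction, I would assume the points $p_i=(a_i,b_i)$ are collinear and lie on a common line $y=mx+c$, so that $b_i=ma_i+c$ for every $i$. Substituting this into the equation $y=a_ix+b_i$ of the line $D(p_i)$ and evaluating at $x=-m$ yields $y=c$ independently of $i$, so every $D(p_i)$ passes through the fixed point $(-m,c)$ and the $l$ lines are concurrent.

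For the converse I would simply reverse this algebra: if the $l$ lines $D(p_i)=\{y=a_ix+b_i\}$ meet at a common point $(x_0,y_0)$, then $y_0=a_ix_0+b_i$ for every $i$, i.e.\ $b_i=-x_0a_i+y_0$, which is exactly the condition that the points $p_i=(a_i,b_i)$ all lie on the single line $y=-x_0x+y_0$.

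There is essentially no obstacle; the content is the classical self-duality of the incidence relation between non-vertical lines and points in the plane. The only detail worth noting is that $D$ (and its inverse on points) always produces non-vertical lines, so the output stays within the setting of \textit{nearly general position}; and under this duality the hypothesis of no two parallel lines corresponds to the hypothesis of pairwise distinct $x$-coordinates on the dual point set, which is exactly the assumption needed to invoke Theorem~2.1 of \cite{BLBC} in the subsequent proof of Theorem~\ref{thm1.1}.
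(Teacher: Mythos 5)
Your proof is correct and is exactly the standard duality computation that the paper implicitly relies on; the paper states this observation without any proof at all. Your closing remark — that no two parallel lines dualizes to pairwise distinct $x$-coordinates, so the vertical-line case (where collinear points would dualize to parallel, non-concurrent lines) never arises in the application — covers the only edge case worth flagging.
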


\begin{proof}[Proof of Theorem \ref{thm1.1}]
From these observations, $N$-point set in the plane containing either $l$ collinear members, a $p$-cup or a $q$-cap has a one-to-one correspondence to the family of $N$-lines in the plane containing either $l$ concurrent lines, a $p$-cup or a $q$-cap.
Hence, we obtain
$
f_L (l,p,q) = f (l,p,q).
$
Then it follows from Theorem $2.1$ that for some absolute constant $c>1$,
$$
\ESl (l, n) \leq f_L (l, n,n) \leq c(n + l - 1) \cdot \binom{2n-4}{n-2}.
$$
\end{proof}

\section{Lower bound}
Recall that our families of lines are always in nearly general position.
Before proving the lower bound, we calculate the exact value of $\ESl (l, n)$ for fixed $l$ and $n \leq 4$.
Since an non-vertical line defines no $2$-cell and any family of $2$-lines defines $2$-cells, $\ESl (l, 2) = 2$.
For each $2 \leq i \leq l - 1$, a family of $i$-lines $L_i$ in which they intersect at a single point always defines some $2$-cells but no $3$-cell.
Since $\ESl (3) = 3$ \cite{ESFL}, a family of $l$-lines formed by adding one non-vertical line to $L_{l-1}$ always forms a $3$-cell or $l$ concurrent lines.
Hence, $\ESl (l, 3) = (l-1) + 1 = l$.
$\ESl (4) = 4$ shows that a family of the union of $L_{l-1}$ and two non-vertical lines contains $4$-cells or $l$ concurrent lines.
Therefore, $\ESl (l, 4) = l + 1$.

For a non-vertical line $a$, $\varepsilon > 0$, $l \geq 3$ and a family of lines $\mathcal{F}$, 
we can find a proper affine transformation so called \textit{unbounded-cell-preserving affine transformation} such that the image of $\mathcal{F}$ is the family of lines $\mathcal{F} (a, \varepsilon)$ satisfying the following properties \cite{ESFL}:

\begin{enumerate}[(i)]
\item the slopes of all lines in $\mathcal{F} (a, \varepsilon)$ are within $\varepsilon$ of the slope $a$;
\item all intersections of the lines of $\mathcal{F} (a, \varepsilon)$ are below the $x$-axis;
\item the distance between any two intersections of the lines of $\mathcal{F} (a, \varepsilon)$ is at most $\varepsilon$;
\end{enumerate}

\begin{figure}[htbp]
\tikzset{every picture/.style={line width=0.75pt}} %set default line width to 0.75pt        

\begin{tikzpicture}[x=0.552pt,y=0.5pt,yscale=-1,xscale=1]
%uncomment if require: \path (0,488); %set diagram left start at 0, and has height of 488

%Straight Lines [id:da09736057810738874] 
\draw    (71.06,421) -- (293.11,155) ;
%Straight Lines [id:da07773694960647015] 
\draw    (137.67,1) -- (156.33,418) ;
%Straight Lines [id:da7544547106140562] 
\draw    (0.89,265) -- (293.11,177) ;
%Straight Lines [id:da9441074222877468] 
\draw    (9.77,1) -- (294,307) ;
%Straight Lines [id:da5635711450300669] 
\draw    (0,397) -- (174.98,1) ;
%Straight Lines [id:da5320605473885638] 
\draw    (294,133) -- (0,289) ;
%Straight Lines [id:da7237245864755986] 
\draw    (224.72,0) -- (107.47,419) ;
%Straight Lines [id:da6131079077236368] 
\draw    (360,423) -- (618,59) ;
%Straight Lines [id:da5964694990239536] 
\draw    (629,3) -- (371.29,420.47) ;
%Straight Lines [id:da050834898496960434] 
\draw    (366.47,353.86) -- (614,125) ;
%Straight Lines [id:da5729755196360979] 
\draw    (616,1) -- (395.59,420.47) ;
%Straight Lines [id:da31681130021711423] 
\draw    (363.24,404.45) -- (626,25) ;
%Straight Lines [id:da39473899022039705] 
\draw    (617,95) -- (364.04,377.47) ;
%Straight Lines [id:da4190021005137259] 
\draw    (623,42) -- (365.21,421.31) ;
%Straight Lines [id:da04656079131881685] 
\draw [color={rgb, 255:red, 155; green, 155; blue, 155 }  ,draw opacity=1 ][fill={rgb, 255:red, 155; green, 155; blue, 155 }  ,fill opacity=1 ]   (368,119) -- (632,118.01) ;
\draw [shift={(635,118)}, rotate = 179.79] [fill={rgb, 255:red, 155; green, 155; blue, 155 }  ,fill opacity=1 ][line width=0.08]  [draw opacity=0] (12.5,-6.01) -- (0,0) -- (12.5,6.01) -- cycle    ;
%Straight Lines [id:da057751992743469316] 
\draw [color={rgb, 255:red, 208; green, 2; blue, 27 }  ,draw opacity=1 ]   (623,62) -- (363,390) ;
%Right Arrow [id:dp3821805782637022] 
\draw   (408.33,195.98) -- (435.81,214.2) -- (437.19,212.12) -- (452.76,228.42) -- (431.68,220.43) -- (433.06,218.35) -- (405.57,200.13) -- cycle ;
%Right Arrow [id:dp4776974561434555] 
\draw   (426.33,174.98) -- (453.81,193.2) -- (455.19,191.12) -- (470.76,207.42) -- (449.68,199.43) -- (451.06,197.35) -- (423.57,179.13) -- cycle ;
%Right Arrow [id:dp5081199966882126] 
\draw   (393.33,219.98) -- (420.81,238.2) -- (422.19,236.12) -- (437.76,252.42) -- (416.68,244.43) -- (418.06,242.35) -- (390.57,224.13) -- cycle ;
%Right Arrow [id:dp4625489636533602] 
\draw   (545.37,292.21) -- (518.55,273.03) -- (517.1,275.06) -- (502.12,258.21) -- (522.9,266.95) -- (521.45,268.98) -- (548.26,288.16) -- cycle ;
%Right Arrow [id:dp6093267321699483] 
\draw   (526.63,312.56) -- (499.81,293.37) -- (498.36,295.4) -- (483.38,278.56) -- (504.16,287.3) -- (502.71,289.32) -- (529.53,308.51) -- cycle ;
%Right Arrow [id:dp9347974075278174] 
\draw   (561.21,268.76) -- (534.39,249.58) -- (532.94,251.6) -- (517.96,234.76) -- (538.74,243.5) -- (537.29,245.52) -- (564.11,264.71) -- cycle ;

% Text Node
\draw (136,449.4) node [anchor=north west][inner sep=0.75pt]    {$\mathcal{F}$};
% Text Node
\draw (494,446.4) node [anchor=north west][inner sep=0.75pt]    {$\mathcal{F}( a,\ \varepsilon )$};
% Text Node
\draw (371,311.4) node [anchor=north west][inner sep=0.75pt]  [color={rgb, 255:red, 208; green, 2; blue, 27 }  ,opacity=1 ]  {$a$};
% Text Node
\draw (638,110.4) node [anchor=north west][inner sep=0.75pt]  [color={rgb, 255:red, 74; green, 74; blue, 74 }  ,opacity=1 ]  {$x$};

\end{tikzpicture}
\caption{}
\end{figure}

Intuitively, a family of lines $\mathcal{F} (a, \varepsilon)$ can be regarded as a contraction of $\mathcal{F}$ toward a line $a$ while preserving how they intersect.
Moreover, this affine transformation preserves cups, caps, cells unbounded to the right (left) and concurrency.
Hence, if a family of lines $\mathcal{F}$ has no $(p+1)$-cup, $(q+1)$-cap, $l$ concurrent lines, and no $4$-cell unbounded to the right, then $\mathcal{F} (a, \varepsilon)$ also contains no $(p+1)$-cup, $(q+1)$-cap, $l$ concurrent lines, and no $4$-cell unbounded to the right. See Figure 3.

\begin{prop}
For each $p,q \geq 2, l \geq 3,$ there is a family $\mathcal{F}_{p,q}^l$ consisting of at least $\frac{l-1}{2} \binom{p+q-2}{q-1} - \frac{l-3}{2} \binom{p+q-4}{q-2}$ lines that contains neither $l$ concurrent lines, $(p+1)$-cup, $(q+1)$-cap nor $4$-cell unbounded to the right.
\end{prop}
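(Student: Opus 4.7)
The plan is to mimic the classical Erd\"{o}s--Szekeres cup--cap induction, adapted to allow up to $l-1$ concurrent lines. Writing $N(p,q,l):=\tfrac{l-1}{2}\binom{p+q-2}{q-1}-\tfrac{l-3}{2}\binom{p+q-4}{q-2}$, Pascal's identity applied term by term to both binomial coefficients yields $N(p,q,l)=N(p-1,q,l)+N(p,q-1,l)$. It therefore suffices to verify the proposition directly in the small base cases $p=2$ or $q=2$, and, in the inductive step, to produce $\mathcal{F}_{p,q}^{l}$ of size $|\mathcal{F}_{p-1,q}^{l}|+|\mathcal{F}_{p,q-1}^{l}|$ forbidding the four stipulated configurations.

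For the base cases I would construct $\mathcal{F}_{2,q}^{l}$ (and symmetrically $\mathcal{F}_{p,2}^{l}$) from a short list of pencils, each consisting of at most $l-1$ lines through a common apex. A single pencil contains no $k$-cell for $k\ge 3$, so it contributes no cups, caps, or $4$-cells, and the $l-1$ concurrency allowance is exactly what supplies the multiplicative factor $\tfrac{l-1}{2}$ beyond the simple-arrangement count. Arranging the apexes along a steep concave arc and pinching each pencil into a narrow slope band, a finite combinatorial check rules out $3$-cups, $(q+1)$-caps, $l$ concurrent lines, and $4$-cells unbounded to the right.

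For the inductive step, set $\mathcal{A}:=\mathcal{F}_{p-1,q}^{l}$ and $\mathcal{B}:=\mathcal{F}_{p,q-1}^{l}$. Apply the unbounded-cell-preserving transformation to obtain $\tilde{\mathcal{A}}:=\mathcal{A}(a_{\mathcal{A}},\varepsilon)$ clustered near a steep line $a_{\mathcal{A}}$ on the left, and $\tilde{\mathcal{B}}:=\mathcal{B}(a_{\mathcal{B}},\varepsilon)$ clustered near a shallow line $a_{\mathcal{B}}$ far to the right. For small enough $\varepsilon$, every line of $\tilde{\mathcal{A}}$ meets every line of $\tilde{\mathcal{B}}$ in a single region well separated from either cluster's internal intersections, so $\mathcal{F}_{p,q}^{l}:=\tilde{\mathcal{A}}\cup\tilde{\mathcal{B}}$ realises the familiar cup--cap gluing. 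Since the transformation preserves cups, caps, concurrency, and cells unbounded to the right, each of $\tilde{\mathcal{A}},\tilde{\mathcal{B}}$ inherits its forbidden-structure count from the inductive hypothesis; any $(p+1)$-cup (resp.\ $(q+1)$-cap) using lines from both clusters would restrict to a $p$-cup in $\tilde{\mathcal{A}}$ (resp.\ a $q$-cap in $\tilde{\mathcal{B}}$), contradicting induction; and a $4$-cell unbounded to the right must lie inside a single cluster and is excluded there.

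The genuinely new step, which I expect to be the main obstacle, is verifying clause (c): that no $l$ lines of $\tilde{\mathcal{A}}\cup\tilde{\mathcal{B}}$ are concurrent. Within each cluster the maximum pencil size is preserved at $\le l-1$ by induction, so the only way to create an $l$-fold concurrence is for a line of one cluster to pass through an existing $(l-1)$-fold concurrence point of the other. Since there are only finitely many such apex points and the choices $(a_{\mathcal{A}},a_{\mathcal{B}},\varepsilon)$ vary through a parameter space of positive dimension, a generic selection avoids all such coincidences simultaneously. Making this quantitative when both clusters host multiple $(l-1)$-pencils, and confirming that the resulting family is still in nearly general position, is the technical heart of the argument.
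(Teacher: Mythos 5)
Your overall architecture (Pascal recursion, base cases built from pencils of $l-1$ concurrent lines contributing at most two lines to any cup or cap, inductive gluing of $\mathcal{F}_{p-1,q}^{l}$ and $\mathcal{F}_{p,q-1}^{l}$ via the unbounded-cell-preserving transformation) matches the paper. The genuine gap is in the inductive step: you never pin down the geometric data that makes the cup/cap bookkeeping work, and the orientation you do state is backwards. The transformation's defining properties place each cluster's internal intersections below the $x$-axis, and the paper additionally requires the two reference lines to cross \emph{above} the $x$-axis; with that convention, a cup that uses lines from both clusters can contain at most one line from the \emph{steeper} cluster (dually, its points are the rightmost, and a second one would force a point of the other cluster above a line spanned by the steep cluster). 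Hence the family allowed to contribute many lines to a cup is the \emph{shallower} cluster, which is why the paper attaches $\mathcal{F}_{p-1,q}^{l}$ (no $p$-cup) to the flatter line $a_1$ and $\mathcal{F}_{p,q-1}^{l}$ to the steeper line $a_2$. You attach $\mathcal{A}=\mathcal{F}_{p-1,q}^{l}$ to the steep line and $\mathcal{B}=\mathcal{F}_{p,q-1}^{l}$ to the shallow one, and then claim that a $(p+1)$-cup meeting both clusters ``restricts to a $p$-cup in $\tilde{\mathcal{A}}$.'' Under the stated conventions this is exactly what cannot happen: such a cup has at most one line in $\tilde{\mathcal{A}}$ and up to $p$ lines in $\tilde{\mathcal{B}}=\mathcal{F}_{p,q-1}^{l}(a_{\mathcal{B}},\varepsilon)$, and since that family is permitted to contain $p$-cups, no contradiction arises; indeed a $p$-cup of $\tilde{\mathcal{B}}$ plus one suitable line of $\tilde{\mathcal{A}}$ can form a $(p+1)$-cup, so the construction as you describe it can fail. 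A mirrored placement can be made to work, but only with a specific relative position of the cross-cluster intersection region with respect to each cluster's internal intersections; ``well separated'' carries no sign information and is not enough. Nailing down these above/below conditions (the paper's (i)--(iii) plus the crossing above the $x$-axis) is the actual heart of the step, not a routine appeal to ``the familiar cup--cap gluing.''

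Secondary point: you identify the no-$l$-concurrency clause as the main obstacle and propose a genericity argument, but with the correct positioning it is immediate and needs no perturbation. Any point lying on two lines of one cluster is an internal intersection of that cluster, hence below the $x$-axis, while every intersection of a line of one cluster with a line of the other lies near $a_1\cap a_2$, hence above the $x$-axis for small $\varepsilon$; so a concurrency point cannot mix the clusters at all (it cannot even combine two lines of one with one line of the other), and within a single cluster the multiplicity is at most $l-1$ by induction. Your genericity argument could be repaired, but as stated it is also incomplete: you claim the only danger is a line of one cluster passing through an $(l-1)$-fold apex of the other, overlooking, e.g., $l$-fold points assembled from two or more smaller pencils or simple crossings of the two clusters.
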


\begin{proof}
For $p \geq 2, l \geq 3$, we construct $\mathcal{F}_{p,2}^l$ as follows: 
Consider the positive slope lines $l_1, \ldots, l_{\lfloor \frac{p}{2} \rfloor}$ forming a $\lfloor \frac{p}{2} \rfloor$-cup. For each $i \in \{ 1, \ldots, \lfloor \frac{p}{2} \rfloor \}$, let $s_i \subset l_i$ be the segment of the boundary of the $\lfloor \frac{p}{2} \rfloor$-cell. Choose points $a_i$ from the interior of each $s_i$.
For each $i \in \{ 1, \ldots, \lfloor \frac{p}{2} \rfloor \}$, replace $l_i$ with a family of $(l-1)$ concurrent lines $L_i$ such that they all intersect at $a_i$ and the slopes of all the lines in $L_i$ are within $\varepsilon > 0$ of the slope $l_i$ where $\varepsilon > 0$ to be sufficiently small so that all the lines in $L_i$ pass below $a_{i+1}$.
 If $p$ is odd, we can add another line that has greater slope than any of the lines in $L_{\lfloor \frac{p}{2} \rfloor}$ and passes below $a_{\lfloor \frac{p}{2} \rfloor}$.
See Figure 4.
Therefore, define $\mathcal{F}_{p,2}^l$ as
$$
\mathcal{F}_{p,2}^l \coloneqq
\begin{cases}
 L_1 \cup \cdots \cup L_{\lfloor \frac{p}{2} \rfloor} & (\mbox{if} \,\, p \,\, \mbox{is even})\\
 L_1 \cup \cdots \cup L_{\lfloor \frac{p}{2} \rfloor} \cup  \{ \mbox{one positive slope line}\} & (\mbox{if} \,\, p \,\, \mbox{is odd})
\end{cases}.
$$

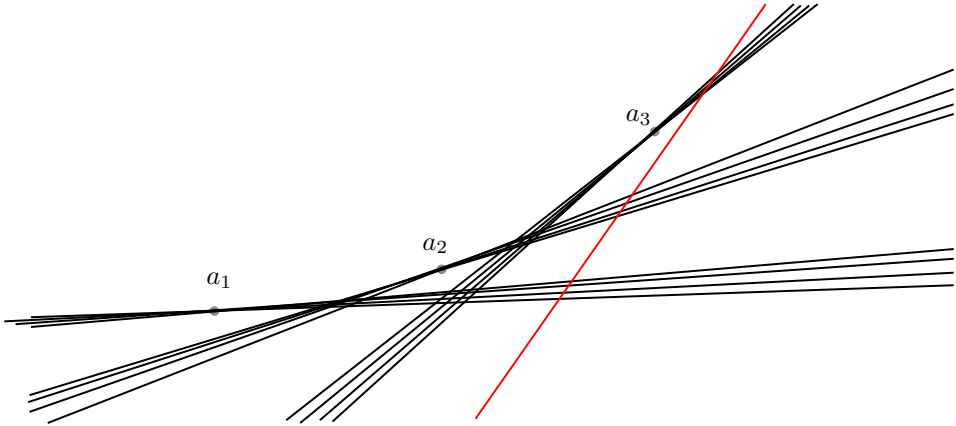
\begin{figure}[b]

\tikzset{every picture/.style={line width=0.75pt}} %set default line width to 0.75pt        

\begin{tikzpicture}[x=0.75pt,y=0.75pt,yscale=-0.7,xscale=0.7]
%uncomment if require: \path (0,300); %set diagram left start at 0, and has height of 300

%Straight Lines [id:da5879764883407748] 
\draw    (1,232) -- (659,176) ;
%Straight Lines [id:da3683814290432452] 
\draw    (1,225) -- (659,202) ;
%Straight Lines [id:da06823625005242007] 
\draw    (-10,230) -- (659,183) ;
%Straight Lines [id:da2865769230583335] 
\draw    (-18,228) -- (659,193) ;
%Straight Lines [id:da42879594343585903] 
\draw    (13,301) -- (659,47) ;
%Straight Lines [id:da24795606798661574] 
\draw    (0,281) -- (659,79) ;
%Straight Lines [id:da6537065458960005] 
\draw    (0,293) -- (659,61) ;
%Straight Lines [id:da2977558169999879] 
\draw    (-1,286) -- (659,72) ;
%Straight Lines [id:da5417751335127381] 
\draw    (216,300) -- (545,0) ;
%Straight Lines [id:da6957225331962413] 
\draw    (183,299) -- (562,0) ;
%Straight Lines [id:da609383063702825] 
\draw    (207,299) -- (550,1) ;
%Straight Lines [id:da9163292369608118] 
\draw    (193,301) -- (556,1) ;
%Straight Lines [id:da9074231954658389] 
\draw [color={rgb, 255:red, 252; green, 6; blue, 6 }  ,draw opacity=1 ]   (525,0) -- (318,298) ;
%Shape: Circle [id:dp324130604411232] 
\draw  [draw opacity=0][fill={rgb, 255:red, 0; green, 0; blue, 0 }  ,fill opacity=0.47 ] (135.29,220.47) .. controls (135.23,218.58) and (133.65,217.08) .. (131.76,217.14) .. controls (129.86,217.19) and (128.37,218.77) .. (128.42,220.66) .. controls (128.47,222.56) and (130.05,224.05) .. (131.95,224) .. controls (133.84,223.95) and (135.34,222.37) .. (135.29,220.47) -- cycle ;
%Shape: Circle [id:dp013673154598164095] 
\draw  [draw opacity=0][fill={rgb, 255:red, 0; green, 0; blue, 0 }  ,fill opacity=0.47 ] (297.29,190.47) .. controls (297.23,188.58) and (295.65,187.08) .. (293.76,187.14) .. controls (291.86,187.19) and (290.37,188.77) .. (290.42,190.66) .. controls (290.47,192.56) and (292.05,194.05) .. (293.95,194) .. controls (295.84,193.95) and (297.34,192.37) .. (297.29,190.47) -- cycle ;
%Shape: Circle [id:dp205356205386116] 
\draw  [draw opacity=0][fill={rgb, 255:red, 0; green, 0; blue, 0 }  ,fill opacity=0.47 ] (449.29,91.47) .. controls (449.23,89.58) and (447.65,88.08) .. (445.76,88.14) .. controls (443.86,88.19) and (442.37,89.77) .. (442.42,91.66) .. controls (442.47,93.56) and (444.05,95.05) .. (445.95,95) .. controls (447.84,94.95) and (449.34,93.37) .. (449.29,91.47) -- cycle ;

% Text Node
\draw (124,191) node [anchor=north west][inner sep=0.75pt]   [align=left] {$\displaystyle a_{1}$};
% Text Node
\draw (278,166) node [anchor=north west][inner sep=0.75pt]   [align=left] {$\displaystyle a_{2}$};
% Text Node
\draw (423,74) node [anchor=north west][inner sep=0.75pt]   [align=left] {$\displaystyle a_{3}$};

\end{tikzpicture}
\caption{Construction for $\mathcal{F}_{7,2}^4$. This family contains no $8$-cup.}

\end{figure}

It is easy to check that it contains no $l$ concurrent lines, no $(p+1)$-cup, no $3$-cap and no $4$-cell unbounded to the right. Moreover,
$$
|\mathcal{F}_{p,2}^l| = 
\begin{cases}
 (l-1) \frac{p}{2} & (\mbox{if} \,\, p \,\, \mbox{is even}) \\
 (l-1) \frac{p-1}{2} + 1 & (\mbox{if} \,\, p \,\, \mbox{is odd})
\end{cases}.
$$
Hence, for all $p \geq 2$,
$$
|\mathcal{F}_{p,2}^l| \geq \frac{l-1}{2} p - \frac{l-3}{2}.
$$
We can construct $\mathcal{F}_{2,q}^l$ similarly.

Now suppose we obtain $\mathcal{F}_{p-1,q}^l$ and $\mathcal{F}_{p,q-1}^l$ satisfying the properties.
We construct $\mathcal{F}_{p,q}^l$ in much the same way as Lemma 4.1 in \cite{ESFL}.

Let $a_1$ and $a_2$ be positive slope lines such that $a_2$ has greater slope than $a_1$ and they intersect above the $x$-axis. Take a sufficiently small $\varepsilon > 0$ and set $\mathcal{F}_{p,q}^l \coloneqq \mathcal{F}_{p-1,q}^l (a_1, \varepsilon) \cup \mathcal{F}_{p,q-1}^l (a_2, \varepsilon)$.
Now assume that $\mathcal{G} \subset \mathcal{F}_{p,q}^l$ defines a $4$-cell $P$ unbounded to the right.
By the definition of $\mathcal{F}_{p,q}^l$, $|\mathcal{G} \cap \mathcal{F}_{p-1,q}^l| \geq 1$ and $|\mathcal{G} \cap \mathcal{F}_{p,q-1}^l| \geq 1$.
Therefore, $P$ can not be a $4$-cell, a contradiction.
Now, suppose that $\mathcal{G} \subset \mathcal{F}_{p,q}^l$ defines a $(p+1)$-cup.
Since $a_2$ has greater slope than $a_1$, we can assume that $|\mathcal{G} \cap \mathcal{F}_{p,q-1}^l (a_2, \varepsilon)| \leq 1$. This implies $|\mathcal{G} \cap \mathcal{F}_{p-1,q}^l (a_1, \varepsilon)| \geq p$.  Therefore, there is a $p$-cup or a $(p+1)$-cup in $\mathcal{F}_{p-1,q}^l (a_1, \varepsilon)$, a contradiction.
Similarly, we can show that $\mathcal{F}_{p,q}^l$ contains no $(q+1)$-caps. 
Finally,
$$
\begin{aligned}
&|\mathcal{F}_{p,q}^l| = |\mathcal{F}_{p-1,q}^l| + |\mathcal{F}_{p,q-1}^l| \\
&\geq \frac{l-1}{2} \binom{p+q-3}{q-1} - \frac{l-3}{2} \binom{p+q-5}{q-2} + \frac{l-1}{2} \binom{p+q-3}{q-2} - \frac{l-3}{2} \binom{p+q-5}{q-3} \\
&= \frac{l-1}{2} \binom{p+q-2}{q-1} - \frac{l-3}{2} \binom{p+q-4}{q-2}.
\end{aligned}
$$
\end{proof}

\begin{prop}
Let $k \geq 2$.
If $n = 2k + 2$,
$$
\ESl (l,n) > \frac{l-1}{2} {\binom{2k-2}{k-1}}^2 - \frac{l-3}{2} \binom{2k-4}{k-2}  \binom{2k-2}{k-1}.
$$
If $n = 2k + 1$,
$$
\ESl (l,n) > \frac{l-1}{2} \bigg\{ \binom{2k-2}{k-1} + 1 \bigg\} \cdot \binom{2k-3}{k-1} - \frac{l-3}{2} \binom{2k-4}{k-2} \binom{2k-2}{k-1}.
$$

\end{prop}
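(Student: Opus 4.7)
The plan is to extend the recursive construction of Proposition 3.1 so that the resulting family $\mathcal{F}^*$ avoids not only large cups and caps but all $n$-cells. Since the lower bound for $n = 2k+2$ factors as $|\mathcal{F}_{k,k}^3| \cdot |\mathcal{F}_{k,k}^l|$, a natural candidate is a product-type construction: list the lines of $\mathcal{F}_{k,k}^3$ as $L_1,\dots,L_m$ with $m \geq \binom{2k-2}{k-1}$, and along each $L_i$ insert a contracted copy $\mathcal{F}_{k,k}^l(L_i,\varepsilon_i)$ via the unbounded-cell-preserving affine transformation for sufficiently small $\varepsilon_i > 0$. Set $\mathcal{F}^* := \bigcup_{i=1}^m \mathcal{F}_{k,k}^l(L_i,\varepsilon_i)$. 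For $n = 2k+1$, the analogous construction uses $\mathcal{F}_{k-1,k}^3$ as the outer family together with one carefully placed extra line, accounting for the ``$+1$'' appearing in the statement.

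By counting, $|\mathcal{F}^*| \geq \binom{2k-2}{k-1} \cdot |\mathcal{F}_{k,k}^l|$, realizing the claimed bound for $n = 2k+2$, and the analogous count works for $n = 2k+1$. The absence of $l$ concurrent lines follows easily: within each cluster $\mathcal{F}_{k,k}^l(L_i,\varepsilon_i)$, Proposition 3.1 rules out $l$-concurrency, and cross-cluster $l$-concurrency is prevented by choosing the $\varepsilon_i$'s generically (shrinking them further if necessary).

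The main obstacle is establishing that $\mathcal{F}^*$ contains no $n$-cell. Suppose $\mathcal{G} \subset \mathcal{F}^*$ defines one, and decompose it as a $p'$-cup and a $q'$-cap with $p' + q' = n$. Two simultaneous constraints apply. \emph{Internally}, each cluster contributes at most $k$ lines to either side, since $\mathcal{F}_{k,k}^l$ has neither a $(k+1)$-cup nor a $(k+1)$-cap. \emph{Externally}, taking one representative from each cluster contributing to the cup and letting $\varepsilon_i \to 0$ yields a cup in $\mathcal{F}_{k,k}^3$, which also has no $(k+1)$-cup (and similarly for the cap). The delicate step I expect to be the crux is to combine these inner and outer cup/cap bounds with the geometric features of the contraction---which places all intra-cluster intersections in small disks below the $x$-axis, isolated from one another and from the rightward infinite direction that Proposition 3.1 explicitly controls via the ``no $4$-cell unbounded to the right'' condition---so as to force $p' + q' \leq 2k$, contradicting $n \in \{2k+1, 2k+2\}$. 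This simultaneous two-level recursion argument, lifting the cup/cap/$4$-cell control of Proposition 3.1 to the level of full $n$-cells, is the technical heart of the proof.
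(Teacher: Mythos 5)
Your construction differs from the paper's in one detail that turns out to be essential: the orientation of the outer family. The paper does not attach the clusters to the lines of $\mathcal{F}_{k,k}^3$ itself, but to its reflection $\mathcal{F}_{k,k}'$ over a vertical line, which has no $4$-cell unbounded to the \emph{left}, i.e.\ the direction opposite to the one controlled in the clusters $\mathcal{F}_{k,k}^l(\,\cdot\,,\varepsilon)$ (which have no $4$-cell unbounded to the \emph{right}). This opposition is what makes the argument work: in the critical case where two clusters each contribute at least two lines to an $n$-cell, that cell must be unbounded to the left, bounded by a $\le k$-cap from one cluster and a $\le k$-cup from another, and the outer family's left-unbounded-$4$-cell-freeness then allows at most one further line, giving $|\mathcal{G}|\le 2k+1<n$. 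With your unreflected outer family this fails, and not just as a gap in the argument but potentially in the construction itself: the recursive family $\mathcal{F}_{p,q}^l=\mathcal{F}_{p-1,q}^l(a_1,\varepsilon)\cup\mathcal{F}_{p,q-1}^l(a_2,\varepsilon)$ does contain cells unbounded to the left whose boundary uses at least two lines of each subfamily (the cell squeezed between the lower envelope of the first group and the upper envelope of the second), so for $k\ge 3$ your outer family has $4$-cells (and larger) unbounded to the left; blowing up the two extreme lines of such a cell into a $k$-cap and a $k$-cup of their clusters, while the remaining outer lines of the chain still bound the cell with one line each, can produce a cell with $k+k+2=n$ or more lines. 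So the reflection is not a cosmetic normalization; omitting it can destroy the lower bound.

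Relatedly, the step you yourself flag as the crux is not correct as stated: the goal is not to force $p'+q'\le 2k$. The actual case analysis (on $I=\{i:|\mathcal{G}_i|\ge 2\}$) shows that at most two clusters contribute two or more lines — via the observation that if $C(\mathcal{G}_i)$ is neither a cup nor a cap then $|\mathcal{G}\setminus\mathcal{G}_i|\le 1$, plus a pigeonhole argument ruling out two cluster-cups or two cluster-caps — and in the two-cluster case the cell can legitimately have up to $2k+1$ bounding lines; one only needs $2k+1<n$. Your ``inner bound times outer bound'' heuristic does not by itself yield any bound close to $n$, since a priori up to $k$ clusters could each contribute up to $k$ lines, and a single cluster can meet both the cap and the cup chain. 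Finally, for $n=2k+1$ the paper keeps the same (reflected) outer family and makes exactly one cluster of type $\mathcal{F}_{k,k}^l$ while the others are $\mathcal{F}_{k-1,k}^l$; the ``$+1$'' in the bound comes from this one larger cluster, not from an extra outer line as in your sketch, and your variant is not shown to produce the stated count.
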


\begin{proof} 
Suppose $n = 2k + 2$ for some $k \geq 2$. Let $\mathcal{F}_{k,k}$ be a family of $N \coloneqq \binom{2k-2}{k-1}$ lines containing no three concurrent lines, no $(k+1)$-cup, no $(k+1)$-cap and no $4$-cell unbounded to the right.
Reflecting $\mathcal{F}_{k,k}$ over $y$-axis, we get a family of lines $\mathcal{F}_{k,k}' \coloneqq \{ a_1, \ldots, a_N \}$ with no three concurrent lines, no $(k+1)$-cup, no $(k+1)$-cap and no $4$-cell unbounded to the left. Moreover, applying a suitable (unbounded-cell-preserving) affine transformation, we can assume every $a_i$ has a positive slope and they all intersect above $x$-axis.

Let $\mathcal{F}_{k,k}^l$ be a family of lines given in Proposition $3.1$.
Take a sufficiently small $\varepsilon > 0$ and set $\mathcal{F} = \bigcup_{i=1}^N \mathcal{F}_{k,k}^l (a_i, \varepsilon)$.
Then, 
$$
|\mathcal{F}| \geq \frac{l-1}{2} {\binom{2k-2}{k-1}}^2 - \frac{l-3}{2} \binom{2k-4}{k-2}  \binom{2k-2}{k-1}.
$$
To prove the inequality for $n = 2k + 2$,
it suffices to show that $\mathcal{F}$ contains no $l$ concurrent lines and no $n$ lines in convex position.
For all $i = 1, \ldots, N$, $\mathcal{F}_{k,k}^l (a_i, \varepsilon)$ contains no $l$ concurrent lines. Therefore, 
$\mathcal{F}$ does not contain an intersection point below the $x$-axis where $l$ lines of $\mathcal{F}$ intersect.
Moreover, if we take $\varepsilon > 0$ to be sufficiently small, we may also assume that 
$\mathcal{F}$ does not contain an intersection point above the $x$-axis where $l$ lines of $\mathcal{F}$ intersect.

Now, we show that for $n > 4$, $\mathcal{F}$ contains no $n$ lines in convex position.
The following argument is almost the same as the proof of Proposition $4.2$ in \cite{ESFL}.
Suppose $\mathcal{G} = \{ b_1, \ldots, b_n \} \subset \mathcal{F}$ defines an $n$-cell $C$. For each $i = 1, \ldots, N$, define $\mathcal{G}_i \coloneqq \mathcal{F}_{k,k}^l (a_i, \varepsilon) \cap \mathcal{G}$. 
Note that, for any subset $\mathcal{G'}$ of $\mathcal{G}$ defining a cell (write $C(\mathcal{G'})$), the set $C(\mathcal{G'})$ contains $C$. 
We need the following observation.
\begin{obs}
If $C(\mathcal{G}_i)$ is neither a cup nor a cap, then $|\mathcal{G} \backslash \mathcal{G}_i| \leq 1$.
\end{obs}

\begin{proof}
If $C(\mathcal{G}_i)$ is unbounded to the right, then $|\mathcal{G} \backslash \mathcal{G}_i| \leq 1$.
If $C(\mathcal{G}_i)$ is bounded, then $C(\mathcal{G}_j) = \emptyset$ for all $j \neq i$. Hence, $\mathcal{G}_i = \mathcal{G}$.
Additionally, if $C(\mathcal{G}_i)$ is unbounded to the left, then $\mathcal{G}_j = \emptyset$ for all $j \neq i$. Therefore, $\mathcal{G}_i = \mathcal{G}$.
\end{proof}
To complete the proof, consider the subset $I \coloneqq \{ i : |\mathcal{G}_i| \geq 2 \}$ of $\{1, \ldots, N\}$ and dividing into some cases depending on the number of $|I|$. From this point onward, exactly the same way as \cite{ESFL} (from p.6, l.35 to p.7, l.16) completes the proof.

If $I = \emptyset$, then $\mathcal{G}$ is essentially the same as a subset of $\mathcal{F}_{k,k}'$. Hence, $|\mathcal{G}| \leq 2k < n$, a contradiction.

\begin{figure}[htbp]

\tikzset{every picture/.style={line width=0.75pt}} %set default line width to 0.75pt        

\begin{tikzpicture}[x=0.52pt,y=0.5pt,yscale=-1,xscale=1]
%uncomment if require: \path (0,358); %set diagram left start at 0, and has height of 358

%Straight Lines [id:da7427346730152307] 
\draw    (387.81,356) -- (620.04,0) ;
%Straight Lines [id:da04106570726775405] 
\draw    (396.86,355) -- (592.14,0) ;
%Straight Lines [id:da4503509737169076] 
\draw    (377.26,357) -- (635.87,-1) ;
%Straight Lines [id:da24491027345942307] 
\draw    (371.22,355) -- (657.74,-2) ;
%Straight Lines [id:da9875162472994288] 
\draw    (565,1) -- (511.78,354) ;
%Straight Lines [id:da25545138265181255] 
\draw    (552.93,-1) -- (519.76,355) ;
%Straight Lines [id:da4228548788977875] 
\draw    (574.05,0) -- (506.27,353) ;
%Straight Lines [id:da7397779898155152] 
\draw    (583.85,0) -- (498.76,355) ;
%Straight Lines [id:da11899230597930188] 
\draw [color={rgb, 255:red, 0; green, 0; blue, 0 }  ,draw opacity=1 ] [dash pattern={on 0.84pt off 2.51pt}]  (329,273) -- (660,105) ;
%Shape: Polygon [id:ds39057537955658495] 
\draw  [draw opacity=0][fill={rgb, 255:red, 128; green, 128; blue, 128 }  ,fill opacity=0.23 ] (534.31,208) -- (532.81,213) -- (498.76,355) -- (396.86,355) -- (442.17,273) -- (462.2,242) -- (511.28,180) -- (537.31,167) -- (534.31,201) -- cycle ;
%Straight Lines [id:da22225335384456746] 
\draw    (1,196) -- (300,0) ;
%Straight Lines [id:da8532943515132376] 
\draw    (161,0) -- (17,356) ;
%Straight Lines [id:da9395140675354707] 
\draw    (300,253) -- (0,333) ;
%Straight Lines [id:da978984544152159] 
\draw [color={rgb, 255:red, 208; green, 2; blue, 27 }  ,draw opacity=1 ]   (0,204) -- (299,11) ;
%Straight Lines [id:da6123300233836539] 
\draw    (2,210) -- (298,20) ;
%Straight Lines [id:da9071196531831549] 
\draw    (1,218) -- (300,28) ;
%Shape: Polygon [id:ds688200847792293] 
\draw  [draw opacity=0][fill={rgb, 255:red, 155; green, 155; blue, 155 }  ,fill opacity=0.3 ] (299,11) -- (300,253) -- (31,324) -- (106,136) -- cycle ;

% Text Node
\draw (418.4,246.4) node [anchor=north west][inner sep=0.75pt]    {$\mathcal{G}_{i}$};
% Text Node
\draw (523.31,328.4) node [anchor=north west][inner sep=0.75pt]    {$\mathcal{G}_{j}$};
% Text Node
\draw (177,182.4) node [anchor=north west][inner sep=0.75pt]    {$C(\mathcal{G} ')$};
% Text Node
\draw (1,165.4) node [anchor=north west][inner sep=0.75pt]  [color={rgb, 255:red, 208; green, 2; blue, 27 }  ,opacity=1 ]  {$a_{i} '$};
% Text Node
\draw (36,134.4) node [anchor=north west][inner sep=0.75pt]    {$\mathcal{G}_{i}$};

\end{tikzpicture}

\caption{}

\end{figure}

\begin{figure}
%Figure6

\tikzset{every picture/.style={line width=0.75pt}} %set default line width to 0.75pt        

\begin{tikzpicture}[x=0.52pt,y=0.5pt,yscale=-1,xscale=1]
%uncomment if require: \path (0,358); %set diagram left start at 0, and has height of 358

%Straight Lines [id:da9328266066274749] 
\draw    (0,283) -- (323.05,51) ;
%Straight Lines [id:da7615559582354852] 
\draw    (1.91,277) -- (324,59) ;
%Straight Lines [id:da8662999310394397] 
\draw    (0,290) -- (324,35) ;
%Straight Lines [id:da2687413137436079] 
\draw    (323,137) -- (0,163) ;
%Straight Lines [id:da31902956828500817] 
\draw    (289,1) -- (157,358) ;
%Shape: Polygon [id:ds9437790484816553] 
\draw  [draw opacity=0][fill={rgb, 255:red, 0; green, 0; blue, 0 }  ,fill opacity=0.18 ] (236,143) -- (157,358) -- (0.54,356.69) -- (-1,272) -- (107,206) -- (203,147) -- cycle ;
%Straight Lines [id:da08387882648302392] 
\draw [color={rgb, 255:red, 208; green, 2; blue, 27 }  ,draw opacity=1 ]   (334.89,284) -- (657.94,52) ;
%Straight Lines [id:da6188768816886949] 
\draw    (336.8,278) -- (658.89,60) ;
%Straight Lines [id:da8935590311151553] 
\draw    (334.89,291) -- (658.89,36) ;
%Straight Lines [id:da898902131565944] 
\draw    (416,359) -- (647,1) ;
%Straight Lines [id:da02380788548632551] 
\draw    (641,0) -- (501.89,357) ;
%Shape: Polygon [id:ds916616506146589] 
\draw  [draw opacity=0][fill={rgb, 255:red, 0; green, 0; blue, 0 }  ,fill opacity=0.18 ] (437,211) -- (336.8,278) -- (338,0) -- (641,0) -- (629,27) -- (613,54) -- (561,114) -- (443,207) -- cycle ;
%Straight Lines [id:da13151607357401884] 
\draw    (659,1) -- (351,355) ;
%Straight Lines [id:da9350780228888571] 
\draw [color={rgb, 255:red, 208; green, 2; blue, 27 }  ,draw opacity=1 ]   (321,74) -- (-1,272) ;
%Straight Lines [id:da9411827621574129] 
\draw    (382.8,315.52) -- (356.79,350.37) ;
%Straight Lines [id:da04126826089734692] 
\draw    (378.84,321.06) -- (354,349.26) ;
%Straight Lines [id:da062366304664807304] 
\draw    (379.04,323.34) -- (353.8,346.98) ;
%Straight Lines [id:da47910014504167897] 
\draw    (381.24,313.29) -- (357.19,354.93) ;
%Straight Lines [id:da11929312661330971] 
\draw    (445.06,313.44) -- (422.31,350.51) ;
%Straight Lines [id:da4581027472949526] 
\draw    (443.61,319.14) -- (421.43,349.47) ;
%Straight Lines [id:da49537242442472884] 
\draw    (444.02,321.39) -- (421.02,347.22) ;
%Straight Lines [id:da050037268246749145] 
\draw    (443.3,311.36) -- (423.13,355.01) ;
%Straight Lines [id:da5520070506168087] 
\draw    (518.29,312.29) -- (504.52,353.55) ;
%Straight Lines [id:da7269739292344] 
\draw    (518.16,318.17) -- (503.43,352.74) ;
%Straight Lines [id:da11124529465481436] 
\draw    (519.07,320.27) -- (502.52,350.64) ;
%Straight Lines [id:da8987638058412504] 
\draw    (516.1,310.66) -- (506.33,357.75) ;

% Text Node
\draw (1.11,241.4) node [anchor=north west][inner sep=0.75pt]    {$\mathcal{G}_{i}$};
% Text Node
\draw (15.84,196.22) node [anchor=north west][inner sep=0.75pt]  [font=\scriptsize,rotate=-0.58]  {$\leq k-cap$};
% Text Node
\draw (90,279.4) node [anchor=north west][inner sep=0.75pt]    {$C(\mathcal{G} ')$};
% Text Node
\draw (336,242.4) node [anchor=north west][inner sep=0.75pt]    {$\mathcal{G}_{i}$};
% Text Node
\draw (350.74,202.22) node [anchor=north west][inner sep=0.75pt]  [font=\scriptsize,rotate=-0.58]  {$\leq k-cup$};
% Text Node
\draw (362,106.4) node [anchor=north west][inner sep=0.75pt]    {$C(\mathcal{G} ')$};
% Text Node
\draw (299,91.4) node [anchor=north west][inner sep=0.75pt]  [color={rgb, 255:red, 208; green, 2; blue, 27 }  ,opacity=1 ]  {$a_{i} '$};
% Text Node
\draw (625,90.4) node [anchor=north west][inner sep=0.75pt]  [color={rgb, 255:red, 208; green, 2; blue, 27 }  ,opacity=1 ]  {$a_{i} '$};

\end{tikzpicture}
\caption{}

\end{figure}

If $|I| = 1$, set $I = \{ i \}$, choose an arbitrary line $a_i' \in \mathcal{G}_i$ and set $\mathcal{G}' = (\mathcal{G} \backslash \mathcal{G}_i) \cup \{ a_i' \}$.
Since $\mathcal{G}_i \subset \mathcal{F}_{k,k}^l (a_i, \varepsilon)$, if $|\mathcal{G} \backslash \mathcal{G}_i | \leq 1$, then, $|\mathcal{G}| = |\mathcal{G} \backslash \mathcal{G}_i | + |\mathcal{G}_i| \leq 1 + 2k < n$. Hence we may assume that $C(\mathcal{G}_i)$ is a cup or a cap.
If $C(\mathcal{G}')$ is unbounded to the right, then $\mathcal{G}_i = \{ a_i' \}$, a contradiction. See Figure 5 left.
If $C(\mathcal{G}')$ is bounded, then $\mathcal{G}_i = \{ a_i' \}$ also holds for the same reason. 
Since $\mathcal{G}_i$ forms a cup or a cap and the family of lines $\mathcal{F}_{k,k}'$ contains no $4$-cell unbounded to the left, if $C(\mathcal{G}')$ is unbounded to the left, $|\mathcal{G}| \leq k + 2 < n$ (Figure 6 left). If $C(\mathcal{G}')$ is a cup or a cap, then, $|\mathcal{G}| \leq k + (k - 1) < n$ (Figure 6 right), a contradiction.

If $|I| \geq 2$, then $|\mathcal{G} \backslash \mathcal{G}_i| \geq 2$ for each $i \in I$. By Observation $3$, $C(\mathcal{G}_i)$ is a cup or a cap for each $i \in I$. We divide into two more cases. 
If $|I| = 2$, set $I = \{ i, j \}$ with $i < j$. In this case, $C(\mathcal{G})$ must be an $n$-cell unbounded to the left. 
Moreover, $\mathcal{G}_i$ must define a $k$-cap and $\mathcal{G}_j$ must define a $k$-cup. See Figure 5 right.
Since $\mathcal{F}_{k,k}' = \{ a_1, \ldots, a_N \}$ contains no $4$-cell unbounded to the left, there can be at most one other line in $\mathcal{G}$. Hence, $|\mathcal{G}| \leq 2k + 1 < n$.

If $|I| \geq 3$, by the pigeonhole principle, there exist $\alpha > \beta$ such that $C(\mathcal{G}_\alpha)$ and $C(\mathcal{G}_\beta)$ are either both cups or both caps.
If $C(\mathcal{G}_\alpha)$ and $C(\mathcal{G}_\beta)$ are both cups, then at least one slope in $\mathcal{G}_\alpha$ can not be the part of $C(\mathcal{G})$, a contradiction.
If $C(\mathcal{G}_\alpha)$ and $C(\mathcal{G}_\beta)$ are both caps, then at least one slope in $\mathcal{G}_\beta$ can not be the part of $C(\mathcal{G})$, a contradiction. See Figure 7.

\begin{figure}[htbp]
%Figure 6

\tikzset{every picture/.style={line width=0.75pt}} %set default line width to 0.75pt        

\begin{tikzpicture}[x=0.52pt,y=0.5pt,yscale=-1,xscale=1]
%uncomment if require: \path (0,358); %set diagram left start at 0, and has height of 358

%Straight Lines [id:da9328266066274749] 
\draw    (0,283) -- (323.05,51) ;
%Straight Lines [id:da524287777616526] 
\draw    (119.12,356) -- (314.47,1) ;
%Straight Lines [id:da9602034482528414] 
\draw [color={rgb, 255:red, 74; green, 144; blue, 226 }  ,draw opacity=1 ]   (114.35,358) -- (324,3) ;
%Straight Lines [id:da7615559582354852] 
\draw    (1.91,277) -- (324,59) ;
%Straight Lines [id:da8662999310394397] 
\draw    (0,290) -- (324,35) ;
%Straight Lines [id:da32184944688032724] 
\draw    (124.84,358) -- (303.04,0) ;
%Shape: Polygon [id:ds00798822726543813] 
\draw  [draw opacity=0][fill={rgb, 255:red, 0; green, 0; blue, 0 }  ,fill opacity=0.18 ] (314.47,1) -- (275.4,73) -- (104.82,208) -- (0,283) -- (0.95,2) -- cycle ;
%Straight Lines [id:da47981664252206446] 
\draw [color={rgb, 255:red, 74; green, 144; blue, 226 }  ,draw opacity=1 ]   (338.94,281.45) -- (660,18) ;
%Straight Lines [id:da3981230729977716] 
\draw    (414.26,356.63) -- (610,-1) ;
%Straight Lines [id:da794247249341208] 
\draw [color={rgb, 255:red, 0; green, 0; blue, 0 }  ,draw opacity=1 ]   (409.5,358.58) -- (619,1) ;
%Straight Lines [id:da8585602591963442] 
\draw    (340.76,274.64) -- (660,29) ;
%Straight Lines [id:da5890392690527917] 
\draw    (339.01,289.41) -- (657.74,3) ;
%Straight Lines [id:da5808639348006532] 
\draw    (420.02,358.69) -- (596,1) ;
%Shape: Polygon [id:ds3329516615285182] 
\draw  [draw opacity=0][fill={rgb, 255:red, 0; green, 0; blue, 0 }  ,fill opacity=0.18 ] (557,107) -- (660,29) -- (659,357) -- (420.02,358.69) -- (475,245) -- cycle ;

% Text Node
\draw (41.66,185.4) node [anchor=north west][inner sep=0.75pt]  [font=\scriptsize]  {$\leq k-cup$};
% Text Node
\draw (100.61,250.4) node [anchor=north west][inner sep=0.75pt]  [font=\scriptsize]  {$\leq k-cup$};
% Text Node
\draw (7.11,289.4) node [anchor=north west][inner sep=0.75pt]    {$\mathcal{G}_{\beta }$};
% Text Node
\draw (154.79,320.4) node [anchor=north west][inner sep=0.75pt]    {$\mathcal{G}_{\alpha }$};
% Text Node
\draw (325.84,210.22) node [anchor=north west][inner sep=0.75pt]  [font=\scriptsize,rotate=-0.58]  {$\leq k-cap$};
% Text Node
\draw (462.56,270.06) node [anchor=north west][inner sep=0.75pt]  [font=\scriptsize,rotate=-0.58]  {$\leq k-cap$};
% Text Node
\draw (346.1,288.88) node [anchor=north west][inner sep=0.75pt]  [rotate=-0.58]  {$\mathcal{G}_{\beta }$};
% Text Node
\draw (390.62,320.39) node [anchor=north west][inner sep=0.75pt]  [rotate=-0.58]  {$\mathcal{G}_{\alpha }$};

\end{tikzpicture}
\caption{}
\end{figure}

If $n = 2k + 1$, consider $\mathcal{F} = \mathcal{F}_{k,k}^l (a_1, \varepsilon) \cup \bigcup_{i=2}^N \mathcal{F}_{k-1,k}^l (a_i, \varepsilon)$. Then, the same argument shows that $\mathcal{F}$ satisfies the property. Moreover,
$$
\begin{aligned}
|\mathcal{F}| &\begin{multlined}[t][11.5cm]
\geq \frac{l-1}{2} \binom{2k-2}{k-1} - \frac{l-3}{2} \binom{2k-4}{k-2} \\
+ \bigg\{ \binom{2k-2}{k-1} - 1 \bigg\} \cdot \bigg\{ \frac{l-1}{2} \binom{2k-3}{k-1} - \frac{l-3}{2} \binom{2k-4}{k-2} \bigg\} 
\end{multlined} \\
&= \frac{l-1}{2} \bigg\{ \binom{2k-2}{k-1} + 1 \bigg\} \cdot \binom{2k-3}{k-1} - \frac{l-3}{2} \binom{2k-4}{k-2} \binom{2k-2}{k-1}.
\end{aligned}
$$
\end{proof}

\begin{figure}[htbp]
%Figure 7
\tikzset{every picture/.style={line width=0.75pt}} %set default line width to 0.75pt        

\begin{tikzpicture}[x=0.75pt,y=0.75pt,yscale=-0.75,xscale=0.72]
%uncomment if require: \path (0,300); %set diagram left start at 0, and has height of 300

%Straight Lines [id:da32419630178702175] 
\draw    (191,2) -- (72,300) ;
%Straight Lines [id:da16247709551575906] 
\draw    (260,2) -- (0,293) ;
%Straight Lines [id:da6795507476520735] 
\draw    (0,233) -- (308,24) ;
%Straight Lines [id:da4482164764331915] 
\draw [color={rgb, 255:red, 208; green, 2; blue, 27 }  ,draw opacity=1 ]   (543,1) -- (424,299) ;
%Straight Lines [id:da329932358046813] 
\draw [color={rgb, 255:red, 208; green, 2; blue, 27 }  ,draw opacity=1 ]   (612,1) -- (350,293.5) ;
%Straight Lines [id:da9241178685984182] 
\draw [color={rgb, 255:red, 208; green, 2; blue, 27 }  ,draw opacity=1 ]   (350,232.5) -- (659,26) ;
%Straight Lines [id:da5492274726204807] 
\draw [color={rgb, 255:red, 208; green, 2; blue, 27 }  ,draw opacity=1 ]   (523,1) -- (404,299) ;
%Straight Lines [id:da7078167541089346] 
\draw [color={rgb, 255:red, 208; green, 2; blue, 27 }  ,draw opacity=1 ]   (592,1) -- (349,276.5) ;
%Straight Lines [id:da5057775642852598] 
\draw [color={rgb, 255:red, 208; green, 2; blue, 27 }  ,draw opacity=1 ]   (349,220.5) -- (659,11) ;
%Straight Lines [id:da8753076543632974] 
\draw [color={rgb, 255:red, 208; green, 2; blue, 27 }  ,draw opacity=1 ]   (558,2) -- (439,300) ;
%Straight Lines [id:da009389857417529512] 
\draw [color={rgb, 255:red, 208; green, 2; blue, 27 }  ,draw opacity=1 ]   (627,2) -- (361,298.5) ;
%Straight Lines [id:da637028963027344] 
\draw [color={rgb, 255:red, 208; green, 2; blue, 27 }  ,draw opacity=1 ]   (350,246.5) -- (659,41) ;
%Right Arrow [id:dp4979348144386806] 
\draw  [draw opacity=0][fill={rgb, 255:red, 128; green, 128; blue, 128 }  ,fill opacity=1 ] (300,156.5) -- (323.4,156.5) -- (323.4,154) -- (339,159) -- (323.4,164) -- (323.4,161.5) -- (300,161.5) -- cycle ;
%Shape: Polygon [id:ds8216821350140953] 
\draw  [draw opacity=0][fill={rgb, 255:red, 155; green, 155; blue, 155 }  ,fill opacity=0.3 ] (191,2) -- (135,141) -- (0,233) -- (2,2) -- cycle ;
%Shape: Polygon [id:ds6569099887384238] 
\draw  [draw opacity=0][fill={rgb, 255:red, 155; green, 155; blue, 155 }  ,fill opacity=0.3 ] (514,110.25) -- (486,141) -- (350,232.5) -- (350,0) -- (558,2) -- cycle ;

\end{tikzpicture}
\caption{Some cups change into cups with larger size.}

\end{figure}
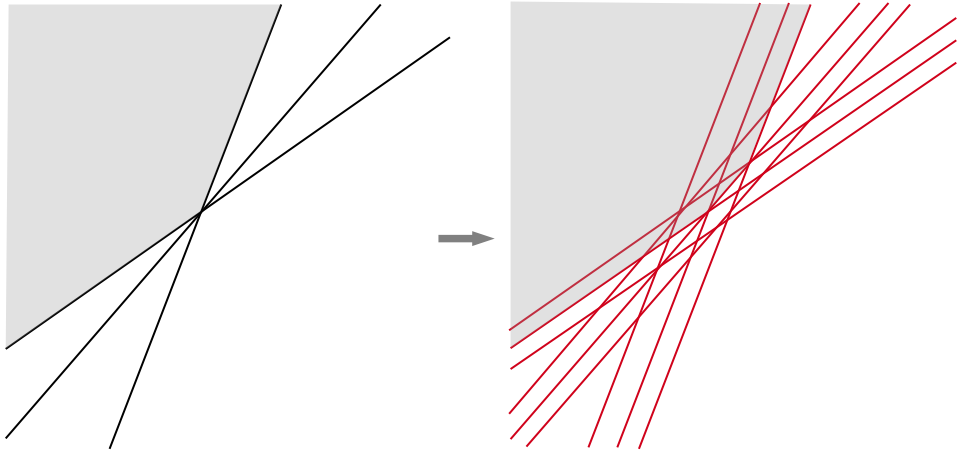

\,\\
\textbf{Remark.}
If we replace \(\mathcal{F}_{k,k}\) with \(\mathcal{F}_{k,k}^l\) in the above construction, a larger cup or cap will be formed, as shown in Figure 8.
\\

Before proving Theorem $1.2$, we note that by reflecting $\mathcal{F}_{k,k}^l$ (with no $l$ concurrent lines, no $(k+1)$-cup, no $(k+1)$-cap and no $4$-cell unbounded to the right) over a vertical line, we obtain a family of lines $\mathcal{F'}_{k,k}^l$ with no $l$ concurrent lines, no $(k+1)$-cup, no $(k+1)$-cap and no $4$-cell unbounded to the left.

\begin{proof}[Proof of Theorem \ref{thm1.2}]

The construction is almost the same as the one from Theorem 4.3 in \cite{ESFL}, that is realized by the union of two copies of $\mathcal{F}$ which are constructed in the previous proposition with a proper transformation. The size of the resulting family is $2|\mathcal{F}|$ which gives the best lower bound of $\ESl (l,n)$.
To be more precise, we construct as follows:

For $n = 2k + 2$, let $a$ be a line with slope $1$ and $b$ be a line with slope $-1$ and they intersect above the $x$-axis.
Construct the families $\mathcal{F}_{k,k}^3 (a, \varepsilon)$ and $\mathcal{F'}_{k,k}^3 (b, \varepsilon)$.
Take a sufficiently small $\varepsilon > 0$, then we can assume that both families intersect above the $x$-axis.
Reflect both families over the $x$-axis and move enough in the direction of the $y$-axis so that all intersections are above the $x$-axis.

Let $\mathcal{G} = \{ a_1, \ldots, a_{2N} \}$ be the resulting family where $N = \binom{2k-2}{k-1}$ and suppose the slope of $a_j$ is larger than the slope of $a_i$ for each $i < j$.
Finally, consider the family $\bigcup_{i=1}^N \mathcal{F'}_{k,k}^l (a_i, \varepsilon) \cup \bigcup_{i=N+1}^{2N} \mathcal{F}_{k,k}^l (a_i, \varepsilon)$.
See Figure 9.

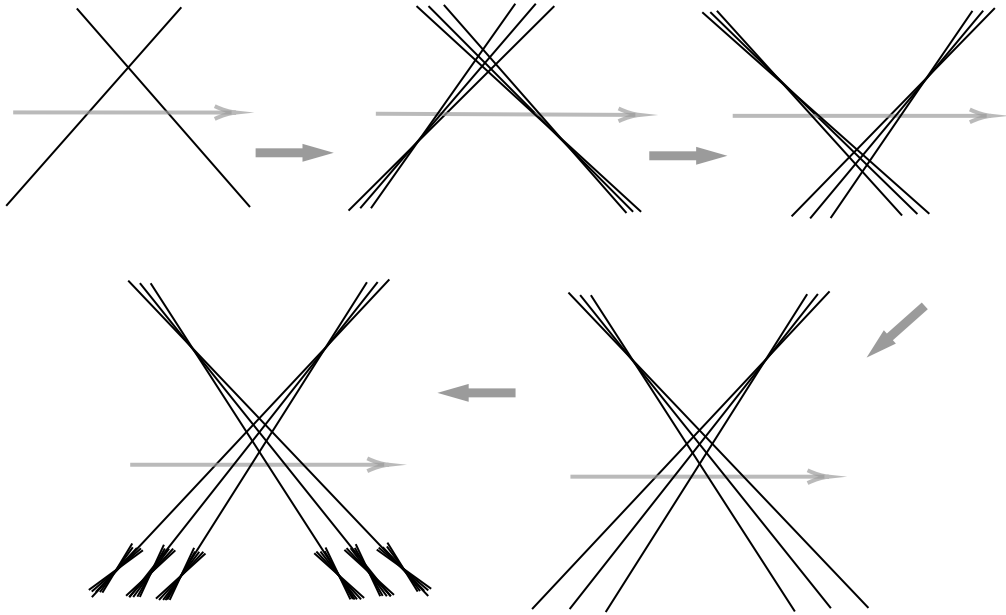
\begin{figure}[htbp]
\tikzset{every picture/.style={line width=0.75pt}} %set default line width to 0.75pt        

\begin{tikzpicture}[x=0.75pt,y=0.75pt,yscale=-0.75,xscale=0.75]
%uncomment if require: \path (0,438); %set diagram left start at 0, and has height of 438

%Straight Lines [id:da06446659103188312] 
\draw    (48.01,2.19) -- (163.28,135.36) ;
%Straight Lines [id:da24982087165398537] 
\draw    (117.56,1.41) -- (1,134.58) ;
%Straight Lines [id:da5390978784994711] 
\draw [color={rgb, 255:red, 155; green, 155; blue, 155 }  ,draw opacity=0.69 ][line width=1.5]    (5.64,71.91) -- (151,72) ;
\draw [shift={(154,72)}, rotate = 180.03] [color={rgb, 255:red, 155; green, 155; blue, 155 }  ,draw opacity=0.69 ][line width=1.5]    (14.21,-4.28) .. controls (9.04,-1.82) and (4.3,-0.39) .. (0,0) .. controls (4.3,0.39) and (9.04,1.82) .. (14.21,4.28)   ;
%Straight Lines [id:da44536070974601727] 
\draw    (282.06,0.6) -- (418.24,138.57) ;
%Straight Lines [id:da1139164153641079] 
\draw    (365.8,0.6) -- (228.84,137.77) ;
%Straight Lines [id:da21937290905485174] 
\draw [color={rgb, 255:red, 155; green, 155; blue, 155 }  ,draw opacity=0.69 ][line width=1.5]    (247.01,72.8) -- (419.75,73.59) ;
\draw [shift={(422.75,73.6)}, rotate = 180.26] [color={rgb, 255:red, 155; green, 155; blue, 155 }  ,draw opacity=0.69 ][line width=1.5]    (14.21,-4.28) .. controls (9.04,-1.82) and (4.3,-0.39) .. (0,0) .. controls (4.3,0.39) and (9.04,1.82) .. (14.21,4.28)   ;
%Straight Lines [id:da7719672757268727] 
\draw    (353.47,-1) -- (236.63,136.17) ;
%Straight Lines [id:da1881729392587801] 
\draw    (339.83,-1) -- (243.77,136.17) ;
%Straight Lines [id:da18379152812904342] 
\draw    (274.1,0.6) -- (423.57,138.57) ;
%Straight Lines [id:da197467578295196] 
\draw    (292.28,-0.2) -- (413.83,139.37) ;
%Straight Lines [id:da6930449359236812] 
\draw    (607.45,140.07) -- (469.61,4.64) ;
%Straight Lines [id:da8058820368501454] 
\draw    (523.72,141.62) -- (659,1.93) ;
%Straight Lines [id:da6935728558618142] 
\draw [color={rgb, 255:red, 155; green, 155; blue, 155 }  ,draw opacity=0.69 ][line width=1.5]    (484.53,74.21) -- (653.55,74.21) ;
\draw [shift={(656.55,74.21)}, rotate = 180] [color={rgb, 255:red, 155; green, 155; blue, 155 }  ,draw opacity=0.69 ][line width=1.5]    (14.21,-4.28) .. controls (9.04,-1.82) and (4.3,-0.39) .. (0,0) .. controls (4.3,0.39) and (9.04,1.82) .. (14.21,4.28)   ;
%Straight Lines [id:da4882627740685259] 
\draw    (536.07,143) -- (651.23,3.68) ;
%Straight Lines [id:da22702276307513225] 
\draw    (549.7,142.75) -- (644.09,3.81) ;
%Straight Lines [id:da5994735805285896] 
\draw    (615.4,139.92) -- (464.28,4.74) ;
%Straight Lines [id:da27777599005857234] 
\draw    (597.24,141.06) -- (474.01,3.76) ;
%Straight Lines [id:da39559296897841145] 
\draw    (58,397) -- (256,183.93) ;
%Straight Lines [id:da47934845828250205] 
\draw [color={rgb, 255:red, 155; green, 155; blue, 155 }  ,draw opacity=0.69 ][line width=1.5]    (83.53,308.21) -- (162,308.21) -- (252.55,308.21) ;
\draw [shift={(255.55,308.21)}, rotate = 180] [color={rgb, 255:red, 155; green, 155; blue, 155 }  ,draw opacity=0.69 ][line width=1.5]    (14.21,-4.28) .. controls (9.04,-1.82) and (4.3,-0.39) .. (0,0) .. controls (4.3,0.39) and (9.04,1.82) .. (14.21,4.28)   ;
%Straight Lines [id:da7895694774673983] 
\draw    (83,397) -- (248.23,185.68) ;
%Straight Lines [id:da10436209017743248] 
\draw    (107,399) -- (241.09,185.81) ;
%Straight Lines [id:da8232775648437354] 
\draw    (65,394) -- (84.9,361) ;
%Straight Lines [id:da9809716502207315] 
\draw    (56,392.17) -- (92,365.59) ;
%Straight Lines [id:da31322815562105966] 
\draw    (60.63,393.55) -- (88.01,364.02) ;
%Straight Lines [id:da6525243684387312] 
\draw    (57.93,393.27) -- (90.71,364.3) ;
%Straight Lines [id:da6093571162115792] 
\draw    (63.39,393.63) -- (85.25,363.57) ;
%Straight Lines [id:da6614739754941861] 
\draw    (89.94,396.89) -- (105.98,361.85) ;
%Straight Lines [id:da5923899866277529] 
\draw    (80.79,396.09) -- (113.55,365.61) ;
%Straight Lines [id:da7054707456429787] 
\draw    (85.54,396.93) -- (109.41,364.5) ;
%Straight Lines [id:da537507971118282] 
\draw    (82.83,396.96) -- (112.13,364.47) ;
%Straight Lines [id:da7573287472549814] 
\draw    (88.3,396.71) -- (106.62,364.36) ;
%Straight Lines [id:da23064169815241486] 
\draw    (109.81,398.87) -- (126.08,363.93) ;
%Straight Lines [id:da3637365935130972] 
\draw    (100.67,398) -- (133.63,367.74) ;
%Straight Lines [id:da3061846683061711] 
\draw    (105.42,398.88) -- (129.49,366.6) ;
%Straight Lines [id:da2545460845980061] 
\draw    (102.7,398.89) -- (132.21,366.59) ;
%Straight Lines [id:da9499039436882477] 
\draw    (108.17,398.67) -- (126.7,366.45) ;
%Straight Lines [id:da9055134187546157] 
\draw    (281.88,396.23) -- (82.21,184.72) ;
%Straight Lines [id:da8506494028513232] 
\draw    (256.88,396.42) -- (89.99,186.41) ;
%Straight Lines [id:da4565670839112479] 
\draw    (232.9,398.61) -- (97.13,186.49) ;
%Straight Lines [id:da7292771449210913] 
\draw    (274.86,393.28) -- (254.7,360.44) ;
%Straight Lines [id:da7940455698818147] 
\draw    (283.84,391.38) -- (247.63,365.09) ;
%Straight Lines [id:da041914043623606956] 
\draw    (279.22,392.79) -- (251.61,363.48) ;
%Straight Lines [id:da10521922776336856] 
\draw    (281.92,392.49) -- (248.91,363.79) ;
%Straight Lines [id:da7466616255414507] 
\draw    (276.46,392.9) -- (254.37,363.01) ;
%Straight Lines [id:da3083631895608785] 
\draw    (249.94,396.37) -- (233.63,361.46) ;
%Straight Lines [id:da15903411497829545] 
\draw    (259.09,395.49) -- (226.08,365.28) ;
%Straight Lines [id:da2246782033510608] 
\draw    (254.34,396.38) -- (230.22,364.13) ;
%Straight Lines [id:da6445154798203201] 
\draw    (257.05,396.38) -- (227.5,364.13) ;
%Straight Lines [id:da8039871829826231] 
\draw    (251.58,396.17) -- (233.01,363.97) ;
%Straight Lines [id:da025515131486725462] 
\draw    (230.09,398.5) -- (213.54,363.69) ;
%Straight Lines [id:da8323587320321757] 
\draw    (239.22,397.56) -- (206.02,367.56) ;
%Straight Lines [id:da0711297698594009] 
\draw    (234.48,398.48) -- (210.15,366.39) ;
%Straight Lines [id:da9446063297530305] 
\draw    (237.19,398.47) -- (207.43,366.4) ;
%Straight Lines [id:da0277654998933905] 
\draw    (231.72,398.29) -- (212.94,366.21) ;
%Straight Lines [id:da3069919640818872] 
\draw    (351,405) -- (549,191.93) ;
%Straight Lines [id:da2623504281256983] 
\draw [color={rgb, 255:red, 155; green, 155; blue, 155 }  ,draw opacity=0.69 ][line width=1.5]    (376.53,316.21) -- (455,316.21) -- (545.55,316.21) ;
\draw [shift={(548.55,316.21)}, rotate = 180] [color={rgb, 255:red, 155; green, 155; blue, 155 }  ,draw opacity=0.69 ][line width=1.5]    (14.21,-4.28) .. controls (9.04,-1.82) and (4.3,-0.39) .. (0,0) .. controls (4.3,0.39) and (9.04,1.82) .. (14.21,4.28)   ;
%Straight Lines [id:da12953784157981207] 
\draw    (376,405) -- (541.23,193.68) ;
%Straight Lines [id:da680772876496242] 
\draw    (400,407) -- (534.09,193.81) ;
%Straight Lines [id:da05900388977331761] 
\draw    (574.88,404.23) -- (375.21,192.72) ;
%Straight Lines [id:da1666404660598113] 
\draw    (549.88,404.42) -- (382.99,194.41) ;
%Straight Lines [id:da8598432511438374] 
\draw    (525.9,406.61) -- (390.13,194.49) ;
%Right Arrow [id:dp21936860634183986] 
\draw  [draw opacity=0][fill={rgb, 255:red, 155; green, 155; blue, 155 }  ,fill opacity=1 ] (167,96) -- (198.2,96) -- (198.2,93) -- (219,99) -- (198.2,105) -- (198.2,102) -- (167,102) -- cycle ;
%Right Arrow [id:dp05970346081169331] 
\draw  [draw opacity=0][fill={rgb, 255:red, 155; green, 155; blue, 155 }  ,fill opacity=1 ] (429,98) -- (460.2,98) -- (460.2,95) -- (481,101) -- (460.2,107) -- (460.2,104) -- (429,104) -- cycle ;
%Right Arrow [id:dp4949620900401337] 
\draw  [draw opacity=0][fill={rgb, 255:red, 155; green, 155; blue, 155 }  ,fill opacity=1 ] (614.37,203.89) -- (591.13,224.7) -- (593.13,226.94) -- (573.63,236.35) -- (585.12,218) -- (587.13,220.23) -- (610.37,199.42) -- cycle ;
%Right Arrow [id:dp1808273364954649] 
\draw  [draw opacity=0][fill={rgb, 255:red, 155; green, 155; blue, 155 }  ,fill opacity=1 ] (340,257) -- (308.8,257) -- (308.8,254) -- (288,260) -- (308.8,266) -- (308.8,263) -- (340,263) -- cycle ;
\end{tikzpicture}
\caption{Construction for  $\bigcup_{i=1}^N \mathcal{F'}_{k,k}^l (a_i, \varepsilon) \cup \bigcup_{i=N+1}^{2N} \mathcal{F}_{k,k}^l (a_i, \varepsilon)$.}
\end{figure}

For $n = 2k + 1$, consider the family 
$$\mathcal{F'}_{k,k}^l (a_1, \varepsilon) \cup \bigcup_{i=2}^N \mathcal{F'}_{k-1,k}^l (a_i, \varepsilon) \cup \mathcal{F}_{k,k}^l (a_{N+1}, \varepsilon) \cup \bigcup_{i=N+2}^{2N} \mathcal{F}_{k-1,k}^l (a_i, \varepsilon).
$$
By the same argument as in the proof of the previous proposition, we can show that these families contain no $n$ lines in convex position. 
\end{proof}

The lower bound in Theorem $1.2$ is not optimal. For example, Figure 10 shows that $\ESl (l, 5) > 2l$ while Theorem $1.2$ gives $\ESl (l, 5) > l + 3$.
It seems that determining the exact value of $\ESl (l, n)$ is a challenging problem.

\begin{figure}[h]
\tikzset{every picture/.style={line width=0.75pt}} %set default line width to 0.75pt        

\begin{tikzpicture}[x=0.75pt,y=0.75pt,yscale=-0.75,xscale=0.75]
%uncomment if require: \path (0,271); %set diagram left start at 0, and has height of 271

%Straight Lines [id:da9315802191124887] 
\draw    (102,267.01) -- (458,0.24) ;
%Straight Lines [id:da5376174074288822] 
\draw    (77,267.01) -- (504,0.24) ;
%Straight Lines [id:da23661066169414657] 
\draw    (125,268.01) -- (416,1.24) ;
%Straight Lines [id:da8545161848998353] 
\draw    (115,267.01) -- (435,0.24) ;
%Straight Lines [id:da8913154465782245] 
\draw    (480,0.24) -- (85,269) ;
%Straight Lines [id:da3399012670910133] 
\draw    (385,1) -- (303,269) ;
%Straight Lines [id:da791768953600896] 
\draw    (557.79,267.4) -- (202.21,0.07) ;
%Straight Lines [id:da1462754369129715] 
\draw    (582.79,267.44) -- (156.21,0) ;
%Straight Lines [id:da004100314719497122] 
\draw    (534.79,268.36) -- (244.21,1.13) ;
%Straight Lines [id:da3620011177025726] 
\draw    (544.79,267.38) -- (225.21,0.11) ;
%Straight Lines [id:da6206147387858352] 
\draw    (180.21,0.04) -- (574.78,269.41) ;
%Straight Lines [id:da4349674792445566] 
\draw    (276,1) -- (358,268) ;

\end{tikzpicture}
\caption{$\ESl (l, 5) > 2l$. The union of two $(l-1)$ concurrent lines and two lines.}
\end{figure}
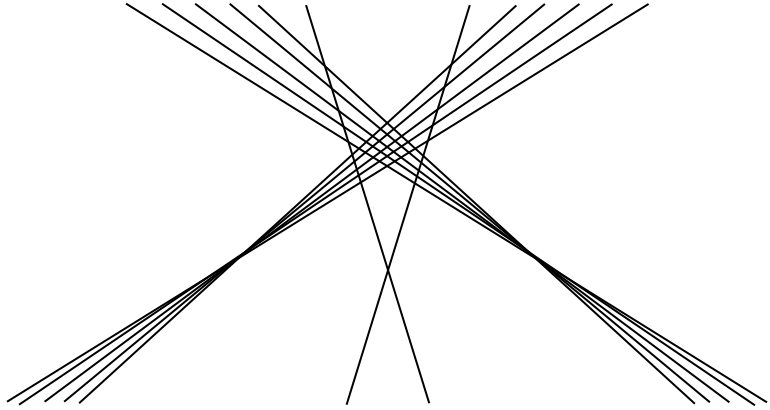

%これからやりたいこと：
%\\
%以下のpositive fractionバージョンが成り立つかどうかを調べたい．
%\begin{conj}
%There exists $C_n > 0$ such that the following holds.
%Let $\mathcal{F}^l$ be a family of $N$ lines with no $l$ concurrent lines. %Then, $\mathcal{F}^l$ contains subfamily $\mathcal
%{F}_1^l, \ldots, \mathcal{F}_n^l$ such that each of size at least $C_n |\mathcal{F}^l|$ and every $n$-tuple obtained by selecting one point from each $\mathcal{F}_i^l$ is in convex position.
%\end{conj}

\section*{acknowledgments}
I\ would like to thank to my mentor, Masaki Tsukamto for many advices and care throughout preparation of this paper. Additionally, I am deeply grateful to Rokuyosha, a café in Kyoto, for their significant support in completing this paper.

\,\\
\centerline{Department of Mathematics, Kyoto University, Kyoto 606-8501, Japan}
\centerline{\href{furukawa.koki.38s@st.kyoto-u.ac.jp} {\nolinkurl{furukawa.koki.38s@st.kyoto-u.ac.jp}}}


\begin{thebibliography}{10}
\bibitem{ESFL}  Imre B\'{a}r\'{a}ny, Edgardo Rold\'{a}n-Pensado and G\'{e}za T\'{o}th, Erd\"{o}s-Szekeres theorem for lines, \textit{Discrete Comp. Geom.}, \textbf{54} (2015), 669–685.
\bibitem{BECK} J\'{o}zsef Beck, On the lattice property of the plane and some problems of Dirac, Motzkin and Erd\"{o}s in
combinatorial geometry, \textit{Combinatorica} \textbf{3} (1983), 281–297.
\bibitem{BLBC} David Conlon, Jacob Fox, Xiaoyu He, Dhruv Mubayi, Andrew Suk, and Jacques Verstraete, Big line or big convex polygon, \url{https://arxiv.org/pdf/2405.03455} (2024).
\bibitem{ES} Paul Erdős and George Szekeres, A combinatorial problem in geometry, \textit{Compos. Math.} \textbf{2} (1935),
463–470.
\bibitem{ES2} Paul Erdős and George Szekeres, On some extremum problems in elementary geometry, \textit{Ann. Univ. Sci. Budapest, E\"{o}tv\"{o}s Sect. Math.} \textbf{3-4} (1960), 53-62.
\bibitem{HOLMSEN} Andreas. F. Holmsen, Hossein. Nassajian. Mojarrad, J\'{a}nos. Pach, and G\'{a}bor. Tardos. Two extensions of the Erd\"{o}s–Szekeres problem, \url{https://arxiv.org/pdf/1710.11415} (2017).
\bibitem{SUK} Andrew Suk, On the Erd\"{o}s-Szekeres convex polygon problem, \textit{J.Am.Math.Soc.} \textbf{30}, (2017), 1047-1053.

\end{thebibliography}
\end{document}